\newtheorem{thm}{Theorem}[section]
\newtheorem{example}[thm]{Example}
\newtheorem{remark}[thm]{Remark}
\newtheorem{lemma}[thm]{Lemma}
\newtheorem{definition}[thm]{Definition}
\def\a{\alpha}
\def\t{\tau}
\def\LD{{_0D_t^\a}}
\def\LDa{{_aD_t^\a}}
\def\RD{{_tD_b^\a}}
\def\LDz{{_0D_t^{0.5}}}
\newenvironment{keywords}{\begin{center}
\begin{minipage}[c]{13.4cm} {\bf Keywords:}} {\end{minipage}
\end{center}}
\newenvironment{msc}{\begin{center}
\begin{minipage}[c]{13.4cm} {\bf MSC 2010:}} {\end{minipage}
\end{center}}
\begin{document}

\title{Numerical Approximations of Fractional Derivatives\\
with Applications\footnote{This is a preprint of a paper 
whose final and definite form will be published in: 
Asian Journal of Control. Submitted 13-Oct-2011; revised 11-Apr-2012; 
accepted 10-Aug-2012.} \thanks{Part of first author's Ph.D.,
which is carried out at the University of Aveiro under
the \emph{Doctoral Program in Mathematics and Applications} (PDMA)
of Universities of Aveiro and Minho.}}

\author{Shakoor Pooseh\\
\texttt{spooseh@ua.pt}
\and Ricardo Almeida\\
\texttt{ricardo.almeida@ua.pt}
\and Delfim F. M. Torres\\
\texttt{delfim@ua.pt}}

\date{Center for Research and Development in Mathematics and Applications\\
Department of Mathematics, University of Aveiro, 3810-193 Aveiro, Portugal}

\maketitle


\begin{abstract}
Two approximations, derived from continuous expansions
of Riemann--Liouville fractional derivatives into series involving
integer order derivatives, are studied. Using those series, one can formally transform
any problem that contains fractional derivatives into a classical problem
in which only derivatives of integer order are present. Corresponding
approximations provide useful numerical tools to compute fractional derivatives of functions.
Application of such approximations to fractional differential equations
and fractional problems of the calculus of variations are discussed.
Illustrative examples show the advantages and disadvantages of each approximation.
\end{abstract}

\begin{msc}
26A33, 33F05, 34A08, 49M99, 65D20.
\end{msc}

\begin{keywords}
fractional calculus, fractional differential equations,
fractional optimal control, numerical approximations, error analysis.
\end{keywords}


\section{Introduction}

Fractional calculus is the study of integrals and derivatives of arbitrary real or complex order.
Although the origin of fractional calculus goes back to the end of the seventeenth century,
the main contributions have been made during the last few decades \cite{Sun,Machado}.
Namely it has proven to be a useful tool when applied
to engineering and optimal control problems (see, \textrm{e.g.}, \cite{Efe,Li,Shen}).
There are several different definitions of fractional derivatives
in the literature, such as Gr\"{u}nwald--Letnikov, Caputo, etc.
Here we consider Riemann--Liouville fractional derivatives.

\begin{definition}[\textrm{cf.} \cite{Kilbas}]
Let $x(\cdot)$ be an absolutely continuous function in $[a,b]$ and $0<\a<1$. Then,
\begin{itemize}
\item the left Riemann--Liouville fractional derivative of order $\alpha$,
$\LDa$, is given by
\begin{equation}
\label{LeftD}
\LDa x(t)=\frac{1}{\Gamma(1-\alpha)}\frac{d}{dt}\int_a^t (t-\t)^{-\alpha}x(\t)d\t,
\quad t\in [a,b] ;
\end{equation}
\item the right Riemann--Liouville fractional derivative of order $\a$,
$\RD$, is given by
$$
\RD x(t)=\frac{1}{\Gamma(1-\a)}\left
(-\frac{d}{dt}\right )\int_t^b (\t-t)^{-\a}x(\t)d\t,
\quad t\in [a,b].
$$
\end{itemize}
\end{definition}

Due to the growing number of applications of fractional calculus in science and engineering
(see, \textrm{e.g.}, \cite{Das,Kai,Machado1}), numerical methods
are being developed to provide tools for solving such problems.
Using the Gr\"{u}nwald--Letnikov approach, it is convenient to approximate
the fractional differentiation operator, $D^\a$, by generalized finite differences.
In \cite{Pudlobny} some problems have been solved by this approximation.
In \cite{Kai1} a predictor-corrector method is presented that converts an initial value problem
into an equivalent Volterra integral equation,
while \cite{Agrawal2} shows the use of numerical methods
to solve such integral equations. A good survey on numerical methods
for fractional differential equations can be found in \cite{Ford}.

A new numerical scheme to solve fractional differential equations
has been recently introduced in \cite{Atan2} and \cite{Jelicic},
making an adaptation to cover fractional optimal control problems.
The scheme is based on an expansion formula
for the Riemann--Liouville fractional derivative.
Here we introduce a generalized version of that expansion and,
together with a different expansion formula that has been used
to approximate the fractional Euler--Lagrange equation in \cite{Atan1},
we perform an investigation of the advantages and disadvantages
of approximating fractional derivatives by these expansions.
The approximations transform fractional derivatives into finite sums
containing only derivatives of integer order.
We show the efficiency of such approximations to evaluate fractional derivatives
of a given function in closed form. Moreover, we discuss the possibility
of evaluating fractional derivatives of discrete tabular data.
The application to fractional differential equations and the calculus of variations
is also developed through some concrete examples. In each case we try to analyze problems
for which the analytic solution is available. This approach gives us the ability
of measuring the accuracy of each method. To this end, we need to measure
how close we get to exact solutions. We use the $2$-norm and define
the error function $E[x(\cdot),\tilde{x}(\cdot)]$ by
$$
E=\|x(\cdot)-\tilde{x}(\cdot)\|_2
=\left(\int_a^b [x(t)-\tilde{x}(t)]^2dt\right)^{\frac{1}{2}},
$$
where $x(\cdot)$ is defined on $[a,b]$.
The results of the paper give interesting numerical procedures
when applied to fractional problems of the calculus of variations.


\section{Expansion formulas to approximate fractional derivatives}
\label{secexpan}

In this section two approximations for the left Riemann--Liouville derivative are presented.
Both approximate the fractional derivatives by finite sums including
only derivatives of integer order and are based
on continuous expansions for the left Riemann--Liouville derivative.


\subsection{Approximation by a sum of integer order derivatives}

The right-hand side of \eqref{LeftD} is expandable in a power series
involving integer order derivatives \cite{Atan1,Samko}.
Let $(c,d)$, $-\infty<c<d<+\infty$, be an open interval in $\mathbb R$,
and $[a,b]\subset(c,d)$ be such that for each $t\in[a,b]$ the closed ball
$B_{b-a}(t)$, with center at $t$ and radius $b-a$, lies in $(c,d)$.
For any real analytic function $x(\cdot)$ in $(c,d)$
we can give the following expansion formula:
\begin{equation}
\label{expanIntInf}
\LDa x(t)=\sum_{n=0}^\infty \binom{\a}{n}\frac{(t-a)^{n-\a}}{\Gamma(n+1-\a)}x^{(n)}(t),
\quad \text{ where } \binom{\a}{n}=\frac{(-1)^{n-1}\a\Gamma(n-\a)}{\Gamma(1-\a)\Gamma(n+1)}.
\end{equation}
The condition $B_{b-a}(t)\subset(c,d)$ comes from the Taylor expansion of $x(t-\t)$ at $t$,
for $\t\in(a,t)$ and $t\in(a,b)$. The proof of this statement that can be found in \cite{Samko}
uses a similar expansion for fractional integrals. Here we outline a direct proof
due to our requirements in Section~\ref{ErrorSec}. Since $x(t)$ is analytic,
it can be expanded as a convergent power series, \textrm{i.e.},
$$
x(\t)=\sum_{n=0}^{\infty}\frac{(-1)^n x^{(n)}(t)}{n!}(t-\t)^n
$$
and then by \eqref{LeftD}
\begin{equation}
\label{expanIntErr}
\LDa x(t)=\frac{1}{\Gamma(1-\alpha)}\frac{d}{dt}
\int_a^t \left((t-\t)^{-\alpha}\sum_{n=0}^{\infty}
\frac{(-1)^n x^{(n)}(t)}{n!}(t-\t)^n\right)d\t,
\quad t\in (a,b).
\end{equation}
Termwise integration, followed by differentiation and simplification, leads to
\begin{equation*}
\LDa x(t)=\frac{x(t)}{\Gamma(1-\a)}(t-a)^{-\alpha}
+\frac{1}{\Gamma(1-\a)}\sum_{n=1}^\infty \left(\frac{(-1)^{n-1}}{(n-\a)(n-1)!}
+\frac{(-1)^n}{n!}\right)x^{(n)}(t)(t-a)^{n-\a}
\end{equation*}
and finally to expansion formula \eqref{expanIntInf}.
From the computational point of view, one can take only
a finite number of terms in \eqref{expanIntInf}
and use the approximation
\begin{eqnarray}
\label{expanInt}
\LDa x(t)\simeq\sum_{n=0}^N C(n,\a)(t-a)^{n-\a}x^{(n)}(t),
\quad \text{ where } C(n,\a)=\binom{\a}{n}\frac{1}{\Gamma(n+1-\a)}.
\end{eqnarray}


\subsection{Approximation using moments of a function}

The following lemma gives the departure point to another expansion.
For a proof see \cite{Kai}.

\begin{lemma}[Lemma 2.12 of \cite{Kai}]
Let $x(\cdot)\in AC[a,b]$ and $0 < \a < 1$. Then the left Riemann--Liouville
fractional derivative $\LDa x(\cdot)$ exists almost everywhere in $[a,b]$.
Moreover, $\LDa x(\cdot)\in L_p[a,b]$ for $1\leq p<\frac{1}{\a}$ and
\begin{equation}
\label{def}
\LDa x(t)=\frac{1}{\Gamma(1-\a)}\left[ \frac{x(a)}{(t-a)^\a}
+\int_a^t (t-\tau)^{-\a}\dot{x}(\tau)d\tau \right], \qquad t\in (a,b).
\end{equation}
\end{lemma}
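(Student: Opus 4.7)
The plan is to start from the definition \eqref{LeftD} and exploit the absolute continuity of $x(\cdot)$ to pull the $d/dt$ inside the integral in the explicit form asserted by \eqref{def}. Since $x\in AC[a,b]$, one has the fundamental theorem of calculus representation $x(\tau)=x(a)+\int_a^\tau \dot{x}(s)\,ds$ with $\dot{x}\in L^1[a,b]$. Substituting this into \eqref{LeftD} and splitting the integral yields
$$
\LDa x(t)=\frac{1}{\Gamma(1-\a)}\frac{d}{dt}\left[x(a)\int_a^t(t-\t)^{-\a}d\t
+\int_a^t(t-\t)^{-\a}\int_a^\t\dot{x}(s)\,ds\,d\t\right].
$$

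Next, I would evaluate the first term directly, $\int_a^t(t-\t)^{-\a}d\t=(t-a)^{1-\a}/(1-\a)$, and apply Fubini--Tonelli to the double integral. The use of Fubini is justified because the integrand is non-negative when $\dot{x}$ is replaced by $|\dot{x}|$, and $\int_a^t(t-\t)^{-\a}d\t<\infty$ together with $\dot{x}\in L^1$ gives absolute integrability over $\{(\t,s):a\le s\le\t\le t\}$. Swapping orders produces $\int_a^t\dot{x}(s)\int_s^t(t-\t)^{-\a}d\t\,ds=\int_a^t\dot{x}(s)(t-s)^{1-\a}/(1-\a)\,ds$. Hence the bracketed expression equals
$$
\frac{x(a)(t-a)^{1-\a}}{1-\a}+\frac{1}{1-\a}\int_a^t(t-s)^{1-\a}\dot{x}(s)\,ds.
$$
Differentiating the first summand is elementary and gives $x(a)(t-a)^{-\a}$. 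The second summand is, up to the constant $\Gamma(2-\a)/\Gamma(1-\a)(1-\a)$, the fractional integral of order $2-\a$ of $\dot{x}$; differentiating it a.e. in $t$ yields $\int_a^t(t-s)^{-\a}\dot{x}(s)\,ds$. Combining both contributions delivers formula \eqref{def}.

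For the existence a.e.\ and the $L_p$ claim, I would recognize the right-hand side of \eqref{def} as the sum of the singular term $x(a)(t-a)^{-\a}/\Gamma(1-\a)$ and a convolution $(k\ast\dot{x})(t)$ on $[a,b]$, where $k(u)=u^{-\a}/\Gamma(1-\a)$. Since $k\in L^q[0,b-a]$ precisely for $q<1/\a$ and $\dot{x}\in L^1[a,b]$, Young's convolution inequality yields $k\ast\dot{x}\in L^p[a,b]$ for every $1\le p<1/\a$; the boundary term $x(a)(t-a)^{-\a}$ is likewise in $L^p[a,b]$ for the same range of $p$. In particular both are finite a.e., which proves that $\LDa x(t)$ is well defined a.e.\ and belongs to $L_p[a,b]$.

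The main obstacle is the last differentiation step: one must justify that $\frac{d}{dt}\int_a^t(t-s)^{1-\a}\dot{x}(s)\,ds=(1-\a)\int_a^t(t-s)^{-\a}\dot{x}(s)\,ds$ holds almost everywhere when only $\dot{x}\in L^1$ is assumed. The clean way is to regard the left-hand side as a fractional integral $_aI_t^{2-\a}\dot{x}$ and invoke the Lebesgue differentiation theorem, using that fractional integration of order $2-\a$ lowers by one via $\frac{d}{dt}{}_aI_t^{2-\a}={}_aI_t^{1-\a}$ in the a.e. sense for $L^1$ functions. Everything else is bookkeeping with Fubini and elementary power-function calculus.
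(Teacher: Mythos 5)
The paper offers no proof of this lemma at all: it states it as Lemma~2.12 of \cite{Kai} and writes ``For a proof see \cite{Kai}.'' So there is nothing in the paper to compare against, and your argument stands on its own. It is correct and is essentially the standard proof (the one in the cited reference): decompose $x(\tau)=x(a)+\int_a^\tau\dot{x}(s)\,ds$, use Tonelli--Fubini to rewrite the double integral as $\frac{1}{1-\a}\int_a^t(t-s)^{1-\a}\dot{x}(s)\,ds$, and differentiate. You correctly identify the only delicate point, namely justifying
$$
\frac{d}{dt}\int_a^t(t-s)^{1-\a}\dot{x}(s)\,ds=(1-\a)\int_a^t(t-s)^{-\a}\dot{x}(s)\,ds \quad\text{a.e.}
$$
for merely integrable $\dot{x}$, and your resolution is sound: by the semigroup property (itself a Tonelli computation of the same kind as the one you already performed), the left-hand integrand's primitive is ${}_aI_t^{1}\bigl({}_aI_t^{1-\a}\dot{x}\bigr)$ up to a constant, i.e.\ the indefinite integral of an $L^1$ function, hence absolutely continuous with the asserted derivative almost everywhere. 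The $L_p$ membership for $1\leq p<1/\a$ via Young's inequality with $k(u)=u^{-\a}/\Gamma(1-\a)\in L^p[0,b-a]$ and the $L^p$-integrability of the boundary term are also correct, and together they give existence a.e. One cosmetic remark: the constant you quote relating $\frac{1}{\Gamma(1-\a)(1-\a)}\int_a^t(t-s)^{1-\a}\dot{x}(s)\,ds$ to ${}_aI_t^{2-\a}\dot{x}$ is actually $1$, since $\Gamma(2-\a)=(1-\a)\Gamma(1-\a)$; this does not affect the argument.
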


Let $V_p(x(\cdot))$, $p\in \mathbb{N}$, denote the $(p-2)$th moment
of a function $x(\cdot)\in AC^2[a,b]$ (\textrm{cf.} \cite{Atan2}):
\begin{equation}
\label{defVp}
V_p(t):=V_p(x(t))=(1-p)\int_a^t (\tau-a)^{p-2}x(\tau) d\tau,
\quad p\in\mathbb{N}, \ t\geq a.
\end{equation}
Following \cite{Atan2}, it is easy to show that,
by successive integrating by parts,
\eqref{def} is reduced to
\begin{equation}
\label{exp2}
\LD x(t)=\frac{x(a)}{\Gamma(1-\a)}(t-a)^{-\a}
+\frac{\dot{x}(a)}{\Gamma(2-\a)}(t-a)^{1-\a}
+\frac{(t-a)^{1-\a}}{\Gamma(2-\a)}
\int_a^t \left(1-\frac{\t-a}{t-a}\right)^{1-\a}\ddot{x}(\t)d\t.
\end{equation}
Using the binomial theorem we conclude that
\begin{eqnarray}
\label{exp3}
\LDa x(t)&=&\frac{x(a)}{\Gamma(1-\a)}(t-a)^{-\a}
+\frac{(t-a)^{1-\a}}{\Gamma(2-\a)}\dot{x}(a)\nonumber\\
& &+\frac{(t-a)^{1-\a}}{\Gamma(2-\a)}\int_a^t
\left(\sum_{p=0}^{\infty}\frac{\Gamma(p-1+\a)}{\Gamma(\a-1)p!}
\left(\frac{\t-a}{t-a}\right)^p\right)\ddot{x}(\t)d\t, \quad t>a.
\end{eqnarray}
Further integration by parts and simplification in \eqref{exp3} gives
\begin{equation}
\label{expanMomInf}
\LDa x(t)=A(\a)(t-a)^{-\a}x(t)+B(\a)(t-a)^{1-\a}\dot{x}(t)
-\sum_{p=2}^{\infty}C(\a,p)(t-a)^{1-p-\a}V_p(t),
\end{equation}
where $V_p(t)$ is defined by \eqref{defVp} and
\begin{eqnarray*}
A(\a)&=&\frac{1}{\Gamma(1-\a)}\left[1+\sum_{p
=2}^{\infty}\frac{\Gamma(p-1+\a)}{\Gamma(\a)(p-1)!}\right],\\
B(\a)&=&\frac{1}{\Gamma(2-\a)}\left[1+\sum_{p
=1}^{\infty}\frac{\Gamma(p-1+\a)}{\Gamma(\a-1)p!}\right],\\
C(\a,p)&=&\frac{1}{\Gamma(2-\a)\Gamma(\a-1)}\frac{\Gamma(p-1+\a)}{(p-1)!}.
\end{eqnarray*}
The moments $V_p(t)$, $p=2,3,\ldots$, are regarded as the solutions
to the following system of differential equations:
\begin{equation}
\label{sysVp}
\left\{
\begin{array}{l}
\dot{V}_p(t)=(1-p)(t-a)^{p-2}x(t)\\
V_p(a)=0, \qquad p=2,3,\ldots
\end{array}
\right.
\end{equation}
For numerical purposes, only a finite number of terms in the series
\eqref{expanMomInf} are used. We approximate the fractional derivative as
\begin{equation}
\label{expanMom}
\LDa x(t)\simeq A(\a,N)(t-a)^{-\a}x(t)+B(\a,N)(t-a)^{1-\a}\dot{x}(t)
-\sum_{p=2}^N C(\a,p)(t-a)^{1-p-\a}V_p(t),
\end{equation}
where $A(\a,N)$ and $B(\a,N)$ are given by
\begin{eqnarray}
A(\a,N)&=&\frac{1}{\Gamma(1-\a)}\left[1+\sum_{p
=2}^N\frac{\Gamma(p-1+\a)}{\Gamma(\a)(p-1)!}\right],\label{A}\\
B(\a,N)&=&\frac{1}{\Gamma(2-\a)}\left[1+\sum_{p
=1}^N\frac{\Gamma(p-1+\a)}{\Gamma(\a-1)p!}\right]\label{B}.
\end{eqnarray}

\begin{remark}
Our approximation \eqref{expanMom} is different from the
one presented in \cite{Atan2}: since the infinite series
$\sum_{p=1}^{\infty}\frac{\Gamma(p-1+\a)}{\Gamma(\a-1)p!}$
tends to $-1$, $B(\a)=0$ and thus
\begin{equation}
\label{expanAtan}
\LD x(t)\simeq A(\a,N)t^{-\a}x(t)-\sum_{p=2}^N C(\a,p)t^{1-p-\a}V_p(t).
\end{equation}
However, regarding the fact that we use a finite sum, in practice one has
\begin{equation*}
1+\sum_{p=1}^N\frac{\Gamma(p-1+\a)}{\Gamma(\a-1)p!}\neq 0.
\end{equation*}
Therefore, and similarly to \cite{Djordjevic,MyID:225},
we keep here the approximation in the form \eqref{expanMom}.
The value of $B(\a,N)$ for some values of $N$ and for different choices of $\a$
is given in Table~\ref{tab}.
\begin{table}[!ht]
\center
\begin{tabular}{|c|c c c c c c c|}
\hline
$N$         &    4   &    7   &   15   &   30   &   70   &  120   &  170   \\
\hline
$B(0.1,N)$  & 0.0310 & 0.0188 & 0.0095 & 0.0051 & 0.0024 & 0.0015 & 0.0011 \\
$B(0.3,N)$  & 0.1357 & 0.0928 & 0.0549 & 0.0339 & 0.0188 & 0.0129 & 0.0101 \\
$B(0.5,N)$  & 0.3085 & 0.2364 & 0.1630 & 0.1157 & 0.0760 & 0.0581 & 0.0488 \\
$B(0.7,N)$  & 0.5519 & 0.4717 & 0.3783 & 0.3083 & 0.2396 & 0.2040 & 0.1838 \\
$B(0.9,N)$  & 0.8470 & 0.8046 & 0.7481 & 0.6990 & 0.6428 & 0.6092 & 0.5884 \\
$B(0.99,N)$ & 0.9849 & 0.9799 & 0.9728 & 0.9662 & 0.9582 & 0.9531 & 0.9498 \\
\hline
\end{tabular}
\caption{$B(\a,N)$ for different values of  $\a$ and $N$.}
\label{tab}
\end{table}
It shows that even for a large $N$, when $\a$ tends to one,
$B(\a,N)$ cannot be ignored. In Figure~\ref{BaN}
we plot $B(\a,N)$ as a function of $N$ for different values of $\a$.
\begin{figure}[!ht]
  \begin{center}
    \subfigure{\includegraphics[scale=0.5]{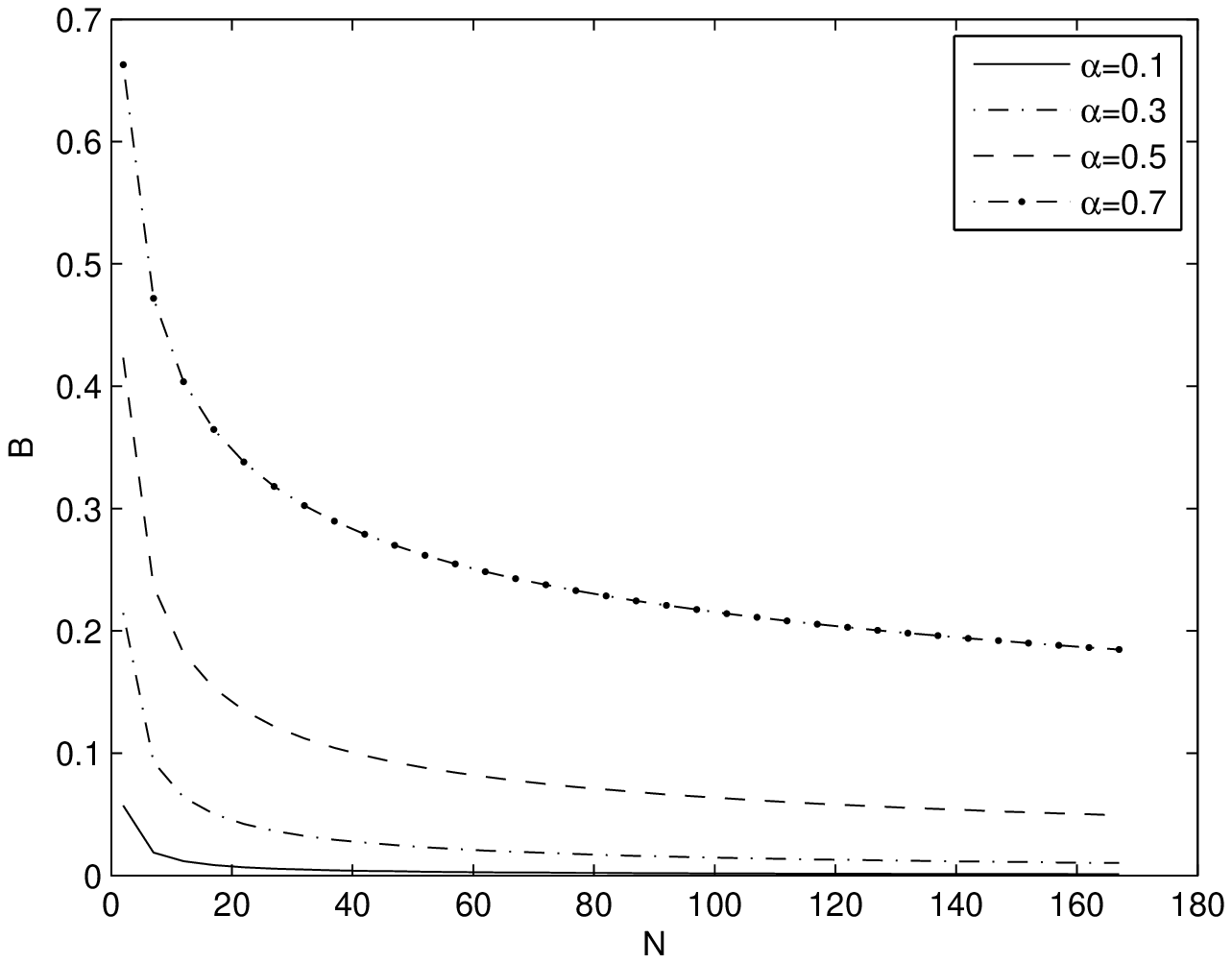}}
    \subfigure{\includegraphics[scale=0.5]{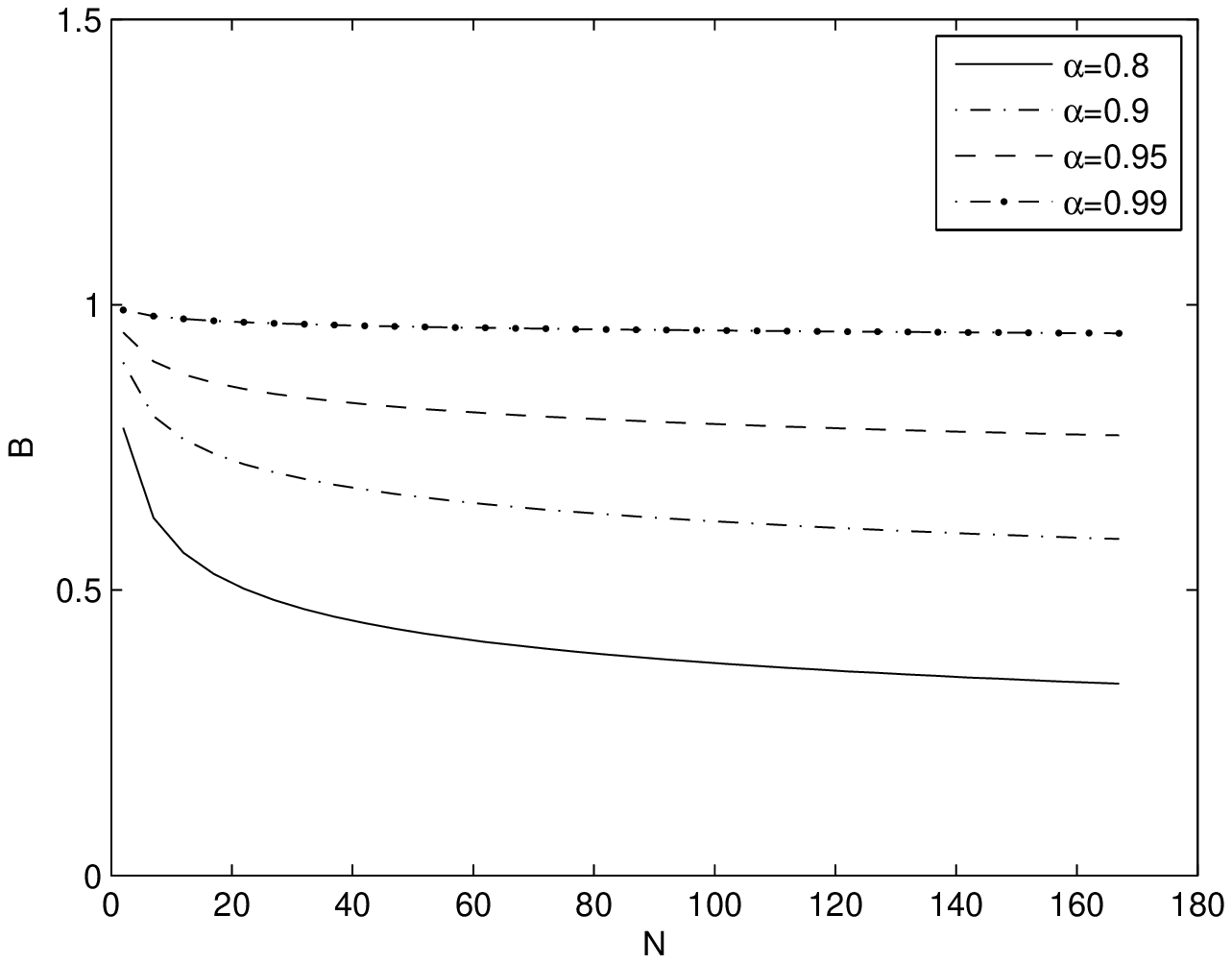}}
  \end{center}
  \caption{$B(\a,N)$ for different values of  $\a$ and $N$.}
  \label{BaN}
\end{figure}
In Section~\ref{sec3} we compare
both approximations with some examples.
We also refer to \cite{Atan0,MyID:221},
where such type of expansion formulas are studied.
\end{remark}

A similar argument gives the expansion formula for $\RD$,
the right Riemann--Liouville fractional derivative.
We propose the following approximation:
\begin{equation*}
\RD x(t)\simeq A(\a,N)(b-t)^{-\a}x(t)-B(\a,N)(b-t)^{1-\a}\dot{x}(t)
-\sum_{p=2}^NC(\a,p)(b-t)^{1-p-\a}W_p(t),
\end{equation*}
where $W_p(t)=(1-p)\int_t^b (b-\tau)^{p-2}x(\tau)d\tau$.
Here $A(\a,N)$ and $B(\a,N)$ are the same as \eqref{A}
and \eqref{B}, respectively.

Formula \eqref{expanMomInf} consists of two parts:
an infinite series and two terms including
the first derivative and the function itself.
It can be generalized to contain derivatives of higher-order.

\begin{thm}
\label{thm:oft}
Fix $n \in \mathbb{N}$ and let $x(\cdot)\in C^n[a,b]$. Then,
\begin{multline}
\label{Gen}
\LDa x(t)=\frac{1}{\Gamma(1-\a)}(t-a)^{-\a}x(t)+\sum_{i=1}^{n-1}
A(\a,i)(t-a)^{i-\a}x^{(i)}(t)\\
+\sum_{p=n}^{\infty}\left[
\frac{-\Gamma(p-n+1+\a)}{\Gamma(-\a)\Gamma(1+\a)(p-n+1)!}(t-a)^{-\a}x(t)
+ B(\a,p)(t-a)^{n-1-p-\a}V_p(t)\right],
\end{multline}
where
\begin{eqnarray*}
A(\a,i)&=&\frac{1}{\Gamma(i+1-\a)}\left[1+\sum_{p
=n-i}^{\infty}\frac{\Gamma(p-n+1+\a)}{\Gamma(\a-i)(p-n+i+1)!}\right],
\quad i = 1,\ldots,n-1,\\
B(\a,p)&=&\frac{\Gamma(p-n+1+\a)}{\Gamma(-\a)\Gamma(1+\a)(p-n+1)!},\\
V_p(t) &=&(p-n+1)\int_a^t (\t-a)^{p-n}x(\t)d\t.
\end{eqnarray*}
\end{thm}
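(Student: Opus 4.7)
The plan is to extend the derivation of \eqref{expanMomInf} --- which is the present statement for $n=2$ --- to arbitrary $n$, by performing $n-1$ successive integrations by parts in place of one. Starting from \eqref{def} and integrating by parts $n-1$ times inside the integral gives the Taylor--type identity
\[
\LDa x(t)=\sum_{i=0}^{n-1}\frac{(t-a)^{i-\a}}{\Gamma(i+1-\a)}x^{(i)}(a)+\frac{1}{\Gamma(n-\a)}\int_a^t(t-\tau)^{n-1-\a}x^{(n)}(\tau)d\tau.
\]
Factoring $(t-\tau)^{n-1-\a}=(t-a)^{n-1-\a}\bigl(1-(\tau-a)/(t-a)\bigr)^{n-1-\a}$, expanding by the binomial series
\[
\Bigl(1-\tfrac{\tau-a}{t-a}\Bigr)^{n-1-\a}=\sum_{p=0}^{\infty}\frac{\Gamma(p-n+1+\a)}{\Gamma(\a-n+1)\,p!}\Bigl(\tfrac{\tau-a}{t-a}\Bigr)^p
\]
and integrating termwise transforms the remainder into a weighted sum of the auxiliary integrals $J_p(t):=\int_a^t(\tau-a)^p x^{(n)}(\tau)d\tau$.

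The core step is integration by parts inside each $J_p(t)$ to transfer derivatives off $x^{(n)}$ onto the polynomial $(\tau-a)^p$. For $p\geq n$ all $n$ iterations are admissible, every boundary contribution at $\tau=a$ vanishes because $(\tau-a)^{p-k+1}|_{\tau=a}=0$ for $k\leq p$, and one obtains
\[
J_p(t)=\sum_{k=1}^{n}(-1)^{k-1}\frac{p!\,(t-a)^{p-k+1}}{(p-k+1)!}x^{(n-k)}(t)+(-1)^n\frac{p!}{(p-n+1)!}V_p(t).
\]
For $0\leq p\leq n-1$ only $p+1$ iterations are admissible; an extra residue $-(-1)^p p!\,x^{(n-p-1)}(a)$ at the lower endpoint appears next to the analogous boundary sum at $\tau=t$.

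It remains to regroup the resulting expression by the objects $x^{(i)}(a)$, $x^{(i)}(t)$, $V_p(t)$ and to simplify the gamma-function coefficients through three applications of Euler's reflection formula $\Gamma(z)\Gamma(1-z)=\pi/\sin(\pi z)$: a first one shows that the $\tau=a$ residues produced for $p<n$ cancel exactly the initial $x^{(i)}(a)$ terms; a second, combined with the shift $q=p-n+i+1$ in the $p$-series, identifies the $x^{(i)}(t)$-contributions (for $i=1,\ldots,n-1$) with $A(\a,i)(t-a)^{i-\a}x^{(i)}(t)$; a third extracts from the $k=n$ contributions both the leading term $(t-a)^{-\a}x(t)/\Gamma(1-\a)$ (produced uniquely by $p=n-1$) and the stated $x(t)$-correction inside $\sum_{p\geq n}$, and assembles the remaining $V_p(t)$-coefficients into $B(\a,p)(t-a)^{n-1-p-\a}$.

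\textbf{Main obstacle.} The argument is essentially bookkeeping, but two points require care. The alternating signs $(-1)^k$, $(-1)^n$, $(-1)^i$ coming from iterated integration by parts must be tracked carefully, otherwise the reflection-formula simplifications do not collapse cleanly. More delicately, the series formally defining the coefficient of $x(t)$ in isolation diverges for $\a\in(0,1)$; this forces one to keep that $x(t)$-correction grouped inside the $p$-sum together with $V_p(t)$, in the same spirit as the $n=2$ case discussed in the remark after \eqref{expanAtan}.
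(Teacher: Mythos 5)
Your proposal is correct and follows essentially the same route as the paper's proof: $n-1$ integrations by parts of \eqref{def} to reach the integral remainder $\frac{1}{\Gamma(n-\a)}\int_a^t(t-\tau)^{n-1-\a}x^{(n)}(\tau)d\tau$, the binomial expansion of the kernel, and then repeated integration by parts of the moment integrals until only $x(\cdot)$ remains, with the reflection formula handling the coefficient identities. The only difference is organizational --- you evaluate each $J_p(t)$ in closed form and regroup at the end, whereas the paper peels off one derivative order at a time across the whole series --- and your explicit tracking of the divergent $x(t)$-coefficient and of the cancellation of the $x^{(i)}(a)$ boundary terms is a welcome elaboration of what the paper leaves as ``a similar routine.''
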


\begin{proof}
Successive integrating by parts in \eqref{def} gives
\begin{eqnarray*}
\LDa x(t)&=&\frac{x(a)}{\Gamma(1-\a)}(t-a)^{-\a}
+\frac{\dot{x}(a)}{\Gamma(2-\a)}(t-a)^{1-\a}
+\cdots+\frac{x^{(n-1)}(a)}{\Gamma(n-\a)}(t-a)^{n-1-\a}\\
& &+\frac{1}{\Gamma(1-\a)}\int_a^t (t-\tau)^{n-1-\a}x^{(n)}(\tau)d\tau.
\end{eqnarray*}
Using the binomial theorem, we expand the integral term as
$$
\int_a^t (t-\tau)^{n-1-\a}x^{(n)}(\tau)d\tau=(t-a)^{n-1-\a}\sum_{p=0}^{\infty}
\frac{\Gamma(p-n+1+\a)}{\Gamma(1-n+\a)p!(t-a)^p}\int_a^t (\tau-a)^px^{(n)}(\tau)d\tau.
$$
Splitting the sum into $p=0$ and $p=1\ldots\infty$,
and integrating by parts the last integral, we get
\begin{eqnarray*}
\LD x(t)&=&\frac{(t-a)^{-\a}}{\Gamma(1-\a)}x(a)+\cdots
+\frac{(t-a)^{n-2-\a}}{\Gamma(n-1-\a)}x^{(n-2)}(a)\\
& &+\frac{(t-a)^{n-1-\a}}{\Gamma(n-\a)}x^{(n-2)}(t)\left[1
+ \sum_{p=1}^\infty\frac{\Gamma(p-n+1+\a)}{\Gamma(-n+1+\a)p!}\right]\\
& &+ \frac{(t-a)^{n-1-\a}}{\Gamma(n-1-\a)} \sum_{p=1}^\infty\frac{\Gamma(p
-n+1+\a)}{\Gamma(-n+2+\a)(p-1)!(t-a)^p}\int_a^t(\tau-a)^{p-1}x^{(n-1)}(\tau)d\tau.\\
\end{eqnarray*}
The rest of the proof follows a similar routine, \textrm{i.e.},
by splitting the sum into two parts, the first term and the rest,
and integrating by parts the last integral until
$x(\cdot)$ appears in the integrand.
\end{proof}

\begin{remark}
The series that appear in $A(\a,i)$ are convergent
for all $ i \in \{1,\ldots,n-1\}$. Fix an $i$ and observe that
$$
\sum_{p=n-i}^{\infty}\frac{\Gamma(p-n+1+\a)}{\Gamma(\a-i)(p-n+i+1)!}
=\sum_{p=1}^{\infty}\frac{\Gamma(p+\a-i)}{\Gamma(\a-i)p!}
={_1F_0}(\a-i,1)-1.
$$
Since $i>\a$, ${_1F_0}(\a-i,1)$ converges by Theorem~2.1.1 of \cite{Andrews}.
In practice we only use finite sums and for $A(\a,i)$ we can easily compute
the truncation error. Although this is a partial error, it gives a good
intuition of why this approximation works well.
Using the fact that ${_1F_0}(a,1)=0$ if $a<0$
(\textrm{cf.} Eq. (2.1.6) in \cite{Andrews}), we have
\begin{equation*}
\begin{split}
\frac{1}{\Gamma(i+1-\a)}&\sum_{p=N+1}^{\infty}\frac{\Gamma(p
-n+1+\a)}{\Gamma(\a-i)(p-n+i+1)!}\\
&=\frac{1}{\Gamma(i+1-\a)}\left({_1F_0}(\a-i,1)
- \sum_{p=0}^{N-n+i+1}\frac{\Gamma(p+\a-i)}{\Gamma(\a-i)p!}\right)\\
&=\frac{-1}{\Gamma(i+1-\a)}\sum_{p=0}^{N-n+i+1}\frac{\Gamma(p+\a-i)}{\Gamma(\a-i)p!}.
\end{split}
\end{equation*}
In Table~\ref{truncError} we give some values for this error,
with $\a=0.5$ and different values for $i$ and $N-n$.
\begin{table}[!ht]
\center
\begin{tabular}{|c|c|c|c|c|c|}
\hline
 \backslashbox{$i$}{$N-n$}& 0 & 5 & 10 & 15 & 20  \\
 \hline
1 &-0.4231&-0.2364&-0.1819&-0.1533& -0.1350\\
 \hline
2 &0.04702&0.009849&0.004663&0.002838&0.001956\\
 \hline
3 &-0.007052&-0.0006566&-0.0001999&-0.00008963&-0.00004890\\
 \hline
4 &0.001007&0.00004690&0.000009517&0.000003201&0.000001397\\
 \hline
\end{tabular}
\caption{Truncation errors of $A(\a,i,N)$ for $\a=0.5$.}
\label{truncError}
\end{table}
\end{remark}

\begin{remark}
Using Euler's reflection formula, one can define $B(\a,p)$
of Theorem~\ref{thm:oft} as
$$
B(\a,p)=\frac{-sin(\pi\a)\Gamma(p-n+1+\a)}{\pi (p-n+1)!}.
$$
\end{remark}

For numerical purposes, only finite sums are taken
to approximate fractional derivatives.
Therefore, for a fixed $n \in \mathbb{N}$ and $N\geq n$, one has
\begin{equation}\label{ApproxDerGeneralcase}
\LDa x(t)\approx \sum_{i=0}^{n-1} A(\a,i,N)(t-a)^{i-\a}x^{(i)}(t)
+\sum_{p=n}^{N} B(\a,p)(t-a)^{n-1-p-\a}V_p(t),
\end{equation}
where
\begin{eqnarray*}
A(\a,i,N)&=&\frac{1}{\Gamma(i+1-\a)}\left[1+\sum_{p=2}^{N}\frac{\Gamma(p
-n+1+\a)}{\Gamma(\a-i)(p-n+i+1)!}\right], \quad i = 0,\ldots,n-1,\\
B(\a,p)&=&\frac{\Gamma(p-n+1+\a)}{\Gamma(-\a)\Gamma(1+\a)(p-n+1)!},\\
V_p(t) &=&(p-n+1)\int_a^t (\t-a)^{p-n}x(\t)d\t.
\end{eqnarray*}

Similarly, we can deduce an expansion formula for the right fractional derivative.

\begin{thm}
Fix $n\in\mathbb N$ and $x(\cdot)\in C^n[a,b]$. Then,
\begin{multline*}
\RD x(t)=\frac{1}{\Gamma(1-\a)}(b-t)^{-\a} x(t)
+\sum_{i=1}^{n-1}A(\alpha,i)(b-t)^{i-\a} x^{(i)}(t)\\
+\sum_{p=n}^\infty\left[ \frac{-\Gamma(p-n+1+\a)}{\Gamma(-\a)\Gamma(1
+\a)(p-n+1)!}(b-t)^{-\a} x(t)+B(\a,p)(b-t)^{n-1-\a-p}W_p(t)\right],
\end{multline*}
where
\begin{eqnarray*}
A(\a,i)&=&\frac{(-1)^i}{\Gamma(i+1-\a)}\left[1+\sum_{p
=n-i}^\infty\frac{\Gamma(p-n+1+\a)}{\Gamma(-i+\a)(p-n+1+i)!}\right],
\quad i = 1,\ldots,n-1,\\
B(\a,p)&=&\frac{(-1)^n\Gamma(p-n+1+\a)}{\Gamma(-\a)\Gamma(1+\a)(p-n+1)!},\\
W_p(t)&=&(p-n+1)\int_t^b (b-\tau)^{p-n}x(\tau)d\tau.
\end{eqnarray*}
\end{thm}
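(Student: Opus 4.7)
The plan is to mirror the proof of Theorem~\ref{thm:oft}, replacing the left derivative by the right derivative and tracking the extra sign changes arising from the direction of integration. First I would establish a right-sided analogue of \eqref{def}: a single integration by parts inside the definition of $\RD x(t)$, followed by differentiation in $t$, yields
\begin{equation*}
\RD x(t)=\frac{1}{\Gamma(1-\a)}\left[\frac{x(b)}{(b-t)^\a}-\int_t^b(\tau-t)^{-\a}\dot{x}(\tau)\,d\tau\right].
\end{equation*}
Then I would integrate by parts $n-1$ more times, taking $u=x^{(k)}(\tau)$ and $dv=(\tau-t)^{k-1-\a}\,d\tau$ at the $k$th step. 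Since $(\tau-t)^{k-\a}$ vanishes at $\tau=t$ for $k\geq 1$, only the $\tau=b$ boundary contributes, and each integration by parts flips the sign of the remainder integral. This produces the intermediate identity
\begin{equation*}
\RD x(t)=\sum_{i=0}^{n-1}\frac{(-1)^i\,x^{(i)}(b)\,(b-t)^{i-\a}}{\Gamma(i+1-\a)}+\frac{(-1)^n}{\Gamma(n-\a)}\int_t^b(\tau-t)^{n-1-\a}\,x^{(n)}(\tau)\,d\tau,
\end{equation*}
which already accounts for the factor $(-1)^n$ in $B(\a,p)$ and the eventual factor $(-1)^i$ in $A(\a,i)$.

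Next I would expand $(\tau-t)^{n-1-\a}=(b-t)^{n-1-\a}\bigl[1-(b-\tau)/(b-t)\bigr]^{n-1-\a}$ by the binomial theorem and integrate termwise, exactly as in the left case but with $a$ replaced by $b$ and the appropriate sign conventions. The $p=0$ term, after one further integration by parts, contributes $x^{(n-1)}(b)-x^{(n-1)}(t)$ and feeds into the coefficient of $x^{(n-1)}(t)(b-t)^{n-1-\a}$; the remaining $p\geq 1$ integrals are each reduced by one integration by parts, lowering the order of differentiation from $n$ to $n-1$. I would then iterate the same split-and-integrate-by-parts routine used in the proof of Theorem~\ref{thm:oft}: at each stage one boundary term of the form $x^{(i)}(t)(b-t)^{i-\a}$ is peeled off, weighted by the reindexed tail of a binomial series (which collects to give $A(\a,i)$), while the remaining integral acquires a lower order of differentiation in $x$.

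After $n$ such reductions, only integrals of $x(\tau)$ against powers $(b-\tau)^{p-n}$ survive; by the definition of $W_p(t)$ these yield precisely the series $\sum_{p=n}^{\infty}B(\a,p)(b-t)^{n-1-\a-p}W_p(t)$ stated in the theorem. The main obstacle I expect is purely bookkeeping: one must verify that the cumulative sign from the alternating integrations by parts, combined with the signs produced by differentiating factors of $(b-\tau)^p$ at each reduction stage, leaves exactly $(-1)^i$ in $A(\a,i)$ and $(-1)^n$ in $B(\a,p)$, and that the Gamma-quotient inside $A(\a,i)$ reindexes to the asserted form with $\Gamma(-i+\a)$ in the denominator. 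Convergence of the binomial series is automatic since $(b-\tau)/(b-t)<1$ on $(t,b)$, and termwise integration is justified under the $C^n$ hypothesis exactly as in Theorem~\ref{thm:oft}.
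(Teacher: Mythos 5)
Your proposal is correct and follows exactly the route the paper intends: the paper's own proof is the single line ``Analogous to the proof of Theorem~\ref{thm:oft},'' and you have carried out that analogy faithfully, with the right-sided version of \eqref{def}, the repeated integrations by parts whose alternating signs produce the factors $(-1)^i$ and $(-1)^n$, and the binomial expansion of $(\tau-t)^{n-1-\a}=(b-t)^{n-1-\a}\bigl[1-(b-\tau)/(b-t)\bigr]^{n-1-\a}$ followed by the same split-and-reduce iteration. Your intermediate identities (the right-sided analogue of \eqref{def} and the $n$-fold integrated-by-parts form) check out, so no gap remains.
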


\begin{proof}
Analogous to the proof of Theorem~\ref{thm:oft}.
\end{proof}


\subsection{Error estimation}
\label{ErrorSec}

This section is devoted to the study of the error caused by choosing a finite number
of terms in the expansions. For the expansion \eqref{expanIntInf},
we separate the error term in \eqref{expanIntErr} and rewrite it as
\begin{eqnarray}
\label{IntErr}
\LDa x(t)&=&\frac{1}{\Gamma(1-\alpha)}\frac{d}{dt}
\int_a^t \left((t-\t)^{-\alpha}\sum_{n=0}^{N}\frac{(-1)^n
x^{(n)}(t)}{n!}(t-\t)^n\right)d\t\nonumber\\
& &+\frac{1}{\Gamma(1-\alpha)}\frac{d}{dt}
\int_a^t \left((t-\t)^{-\alpha}\sum_{n=N+1}^{\infty}\frac{(-1)^n
x^{(n)}(t)}{n!}(t-\t)^n\right)d\t.
\end{eqnarray}
The first term in \eqref{IntErr} gives \eqref{expanInt} directly
and the second term is the error caused by truncation. The next step
is to give a local upper bound for this error, $E_{tr}(t)$.
The series
$$
\sum_{n=N+1}^{\infty}\frac{(-1)^n x^{(n)}(t)}{n!}(t-\t)^n,
\quad \t\in (a,t), \quad t\in (a,b),
$$
is the remainder of the Taylor expansion of $x(\t)$
and thus bounded by $\left|\frac{M}{(N+1)!}(t-\t)^{N+1}\right|$
in which $M=\displaystyle\max_{\t \in [a,t]} \left|x^{(N+1)}(\t)\right|$. Then,
$$
E_{tr}(t)\leq \left|\frac{M}{\Gamma(1-\alpha)(N+1)!}\frac{d}{dt}
\int_a^t (t-\t)^{N+1-\alpha}d\t\right|
=\frac{M}{\Gamma(1-\alpha)(N+1)!}(t-a)^{N+1-\alpha}.
$$

For approximation \eqref{expanMom}, we observe that
the integrand in \eqref{exp2} can be expanded,
by the binomial theorem, as
\begin{eqnarray}
\label{expanError}
\left(1-\frac{\t-a}{t-a}\right)^{1-\a}&=&\sum_{p=0}^{\infty}
\frac{\Gamma(p-1+\a)}{\Gamma(\a-1)p!}\left(\frac{\t-a}{t-a}\right)^p\nonumber\\
&=&\sum_{p=0}^{N}\frac{\Gamma(p-1+\a)}{\Gamma(\a
-1)p!}\left(\frac{\t-a}{t-a}\right)^p+R_N(\t),
\end{eqnarray}
where
$$
R_N(\t)=\sum_{p=N+1}^{\infty}\frac{\Gamma(p
-1+\a)}{\Gamma(\a-1)p!}\left(\frac{\t-a}{t-a}\right)^p.
$$
Substituting \eqref{expanError} into \eqref{exp2}, we get
\begin{eqnarray*}
\LD x(t)&=&\frac{x(a)}{\Gamma(1-\a)}(t-a)^{-\a}
+\frac{\dot{x}(a)}{\Gamma(2-\a)}(t-a)^{1-\a}\\
&& +\frac{(t-a)^{1-\a}}{\Gamma(2-\a)}\int_a^t
\left(\sum_{p=0}^{N}\frac{\Gamma(p-1+\a)}{\Gamma(\a-1)p!}\left(
\frac{\t-a}{t-a}\right)^p+R_N(\t)\right)\ddot{x}(\t)d\t\\
&=&\frac{x(a)}{\Gamma(1-\a)}(t-a)^{-\a}+\frac{\dot{x}(a)}{\Gamma(2-\a)}(t-a)^{1-\a}\\
&& +\frac{(t-a)^{1-\a}}{\Gamma(2-\a)}\int_a^t
\left(\sum_{p=0}^{N}\frac{\Gamma(p-1+\a)}{\Gamma(\a-1)p!}
\left(\frac{\t-a}{t-a}\right)^p\right)\ddot{x}(\t)d\t\\
&& +\frac{(t-a)^{1-\a}}{\Gamma(2-\a)}\int_a^t R_N(\t)\ddot{x}(\t)d\t.
\end{eqnarray*}
At this point, we apply the techniques of \cite{Atan2}
to the first three terms with finite sums.
Then, we receive \eqref{expanMom} with
an extra term of truncation error:
\begin{equation*}
E_{tr}(t)=\frac{(t-a)^{1-\a}}{\Gamma(2-\a)}
\int_a^t R_N(\t)\ddot{x}(\t)d\t.
\end{equation*}
Since $0\leq\frac{\t-a}{t-a}\leq 1$ for $\t\in [a,t]$, one has
\begin{eqnarray*}
|R_N(\t)|&\leq & \sum_{p=N+1}^{\infty}\left|
\frac{\Gamma(p-1+\a)}{\Gamma(\a-1)p!}\right|
=\sum_{p=N+1}^{\infty}\left|\binom{1-\a}{p}\right|
\leq\sum_{p=N+1}^{\infty}\frac{\mathrm{e}^{(1-\a)^2+1-\a}}{p^{2-\a}} \\
&\leq&\int_{p=N}^{\infty}\frac{\mathrm{e}^{(1-\a)^2+1-\a}}{p^{2-\a}}dp
=\frac{\mathrm{e}^{(1-\a)^2+1-\a}}{(1-\a)N^{1-\a}}.
\end{eqnarray*}
Finally, assuming $L_2=\displaystyle\max_{\t \in [a,t]}\left|\ddot{x}(\t)\right|$,
we conclude that
\begin{equation*}
|E_{tr}(t)|\leq
L_2 \frac{\mathrm{e}^{(1-\a)^2+1-\a}}{\Gamma(2-\a)(1-\a)N^{1-\a}} (t-a)^{2-\a}.
\end{equation*}

In the general case, the error is given by the following result.
\begin{thm}
If we approximate the left Riemann--Liouville fractional derivative by the finite sum
\eqref{ApproxDerGeneralcase}, then the error $E_{tr}(\cdot)$ is bounded by
\begin{equation}
\label{eq:formula:rv1}
|E_{tr}(t)|\leq L_n
\frac{\mathrm{e}^{(n-1-\a)^2+n-1-\a}}{\Gamma(n-\a)(n-1-\a)N^{n-1-\a}}(t-a)^{n-\a},
\end{equation}
where
$$
L_n=\displaystyle\max_{\t \in [a,t]}\left|x^{(n)}(\t)\right|.
$$
\end{thm}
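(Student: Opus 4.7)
My plan is to mimic the strategy already used for $n=1$ and $n=2$ earlier in Section~\ref{ErrorSec}, simply carrying all the constants through with $1-\alpha$ replaced by $n-1-\alpha$. First, I would integrate \eqref{def} by parts $n-1$ additional times, exactly as at the start of the proof of Theorem~\ref{thm:oft}, to obtain the exact identity
\begin{equation*}
\LDa x(t)=\sum_{i=0}^{n-1}\frac{x^{(i)}(a)}{\Gamma(i+1-\alpha)}(t-a)^{i-\alpha}+\frac{1}{\Gamma(n-\alpha)}\int_a^t(t-\tau)^{n-1-\alpha}x^{(n)}(\tau)\,d\tau.
\end{equation*}
The boundary terms are exact and contribute nothing to $E_{tr}$; all the approximation error will come from handling the remaining integral.

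Next I would factor the kernel as $(t-\tau)^{n-1-\alpha}=(t-a)^{n-1-\alpha}\bigl(1-\tfrac{\tau-a}{t-a}\bigr)^{n-1-\alpha}$ and apply the binomial theorem, splitting at $p=N$:
\begin{equation*}
\Bigl(1-\frac{\tau-a}{t-a}\Bigr)^{n-1-\alpha}=\sum_{p=0}^{N}\frac{\Gamma(p-n+1+\alpha)}{\Gamma(1-n+\alpha)\,p!}\Bigl(\frac{\tau-a}{t-a}\Bigr)^p+R_N(\tau).
\end{equation*}
Feeding the finite-sum part back into the integral and running the very same successive integrations by parts as in the proof of Theorem~\ref{thm:oft} reproduces precisely the approximation \eqref{ApproxDerGeneralcase}; what is left over is
\begin{equation*}
E_{tr}(t)=\frac{(t-a)^{n-1-\alpha}}{\Gamma(n-\alpha)}\int_a^t R_N(\tau)\,x^{(n)}(\tau)\,d\tau.
\end{equation*}

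For the bounding step I would simply repeat the $n=2$ computation. Since $0\le\tfrac{\tau-a}{t-a}\le 1$ on $[a,t]$, termwise estimation gives $|R_N(\tau)|\le\sum_{p=N+1}^{\infty}\bigl|\binom{n-1-\alpha}{p}\bigr|$, and the same binomial-coefficient inequality already invoked in the text, namely $\bigl|\binom{n-1-\alpha}{p}\bigr|\le \mathrm{e}^{(n-1-\alpha)^2+n-1-\alpha}/p^{n-\alpha}$, combined with a comparison to $\int_N^\infty p^{\alpha-n}\,dp$, yields
\begin{equation*}
|R_N(\tau)|\le\frac{\mathrm{e}^{(n-1-\alpha)^2+n-1-\alpha}}{(n-1-\alpha)N^{n-1-\alpha}}.
\end{equation*}
Inserting this together with $|x^{(n)}(\tau)|\le L_n$ and $\int_a^t d\tau=t-a$ into the expression for $E_{tr}(t)$ delivers exactly the target inequality \eqref{eq:formula:rv1}.

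The main obstacle I anticipate is purely bookkeeping: one must verify that performing the successive integrations by parts on the \emph{truncated} binomial sum generates exactly the coefficients $A(\alpha,i,N)$ and $B(\alpha,p)$ and the moments $V_p$ that appear in \eqref{ApproxDerGeneralcase}, without leaving behind any boundary residue that should have been accounted for separately in $E_{tr}$. This calculation is the same as the one carried out in the proof of Theorem~\ref{thm:oft}, only with the upper index of summation capped at $N$, and the hypothesis $x\in C^n[a,b]$ is precisely what legitimises every derivative evaluation on $[a,t]$ and the use of $L_n$ as a uniform pointwise bound on $x^{(n)}$.
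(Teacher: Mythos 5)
Your proposal is correct and is essentially the paper's own argument: the paper carries out this exact computation in detail only for the case $n=2$ in Section~\ref{ErrorSec} (splitting the binomial series at $p=N$, bounding $|R_N(\t)|$ termwise via the estimate on $\left|\binom{n-1-\a}{p}\right|$ and an integral comparison), and then states the general theorem as the evident generalization, which is precisely what you write out, including the correct $\Gamma(n-\a)$ normalization of the remainder integral. The only point you rightly flag as bookkeeping --- that the truncated sum reproduces \eqref{ApproxDerGeneralcase} with no stray boundary terms --- is handled the same way (by appeal to the successive integrations by parts of Theorem~\ref{thm:oft}) in the paper.
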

From \eqref{eq:formula:rv1} we see that if
the test function grows very fast or the point $t$ is far from $a$,
then the value of $N$ should also increase in order to have a good approximation.
Clearly, if we increase the value of $n$,
then we need also to increase the value of $N$ to control the error.


\section{Numerical evaluation of fractional derivatives}
\label{sec3}

In \cite{Pudlobny} a numerical method to evaluate fractional derivatives
is given based on the Gr\"{u}nwald--Letnikov definition of fractional derivatives.
It uses the fact that for a large class of functions, the Riemann--Liouville
and the Gr\"{u}nwald--Letnikov definitions are equivalent.
We claim that the approximations discussed so far provide a good tool
to compute numerically the fractional derivatives of given functions.
For functions whose higher-order derivatives are easily available,
we can freely choose between approximations \eqref{expanInt} or \eqref{expanMom}.
But in the case that difficulties arise in computing higher-order derivatives,
we choose the approximation \eqref{expanMom} that needs only the values
of the first derivative and function itself. Even if the first derivative
is not easily computable, we can use the approximation given by \eqref{expanAtan}
with large values for $N$ and $\a$ not so close to one. As an example, we compute
$\LD x(t)$, with $\a=\frac{1}{2}$, for $x(t)=t^4$ and $x(t)=e^{2t}$.
The exact formulas of the derivatives are derived from
\begin{equation*}
\LDz(t^n)=\frac{\Gamma(n+1)}{\Gamma(n+1-0.5)}t^{n-0.5}
\quad \text{and} \quad \LDz(e^{\lambda t})
=t^{-0.5}E_{1,1-0.5}(\lambda t),
\end{equation*}
where $E_{\a,\beta}$ is the two parameter
Mittag--Leffler function \cite{Pudlobny}.
Figure~\ref{EvalInt} shows the results using approximation \eqref{expanInt}.
As we can see, the third approximations are reasonably accurate for both cases.
Indeed, for $x(t)=t^4$, the approximation with $N=4$ coincides with
the exact solution because the derivatives of order five and more vanish.
\begin{figure}[!ht]
  \begin{center}
    \subfigure[$\LDz(t^4)$]{\label{figaNumEvalInt}\includegraphics[scale=0.5]{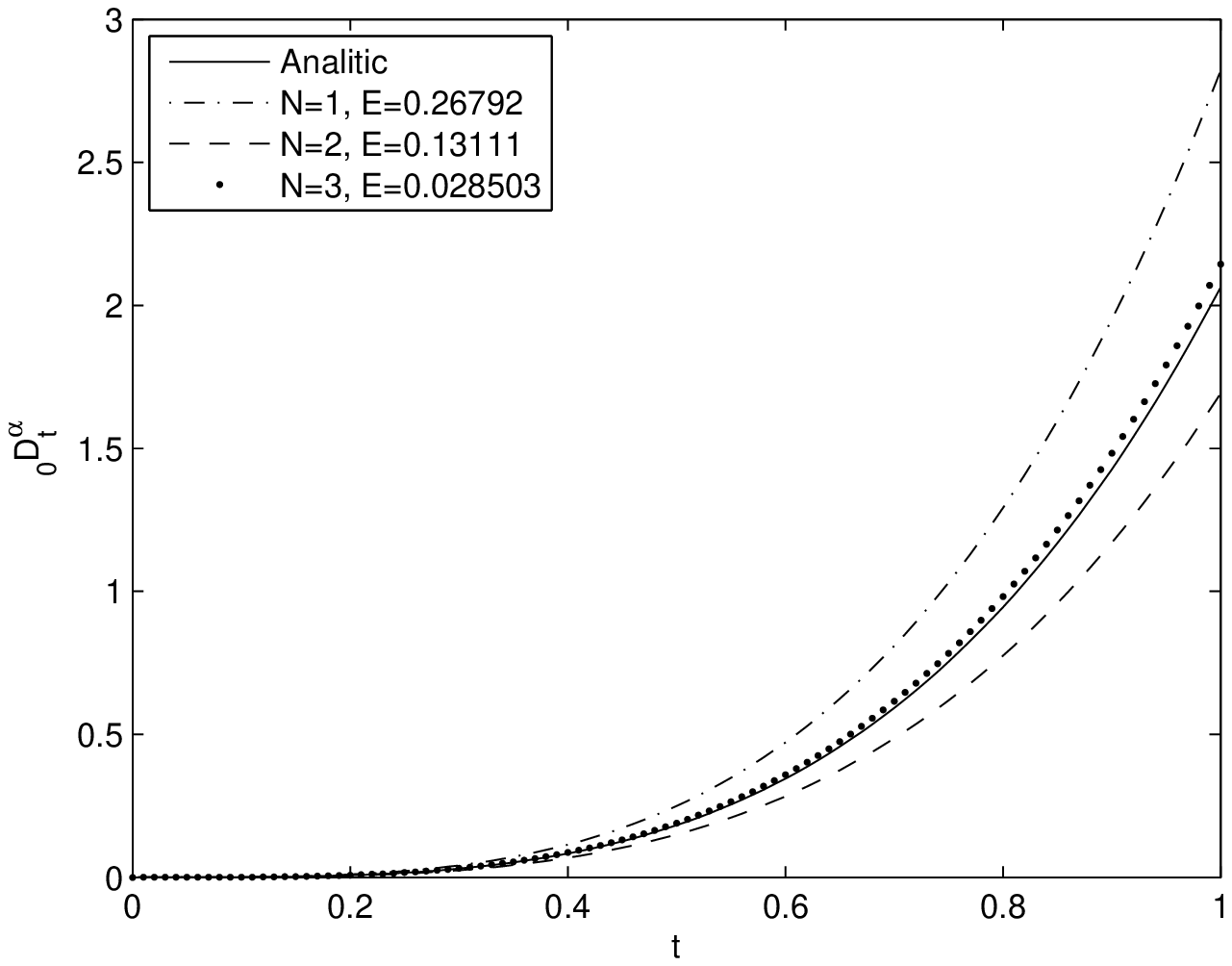}}
    \subfigure[$\LDz(e^{2t})$]{\label{figbNumEvalInt}\includegraphics[scale=0.5]{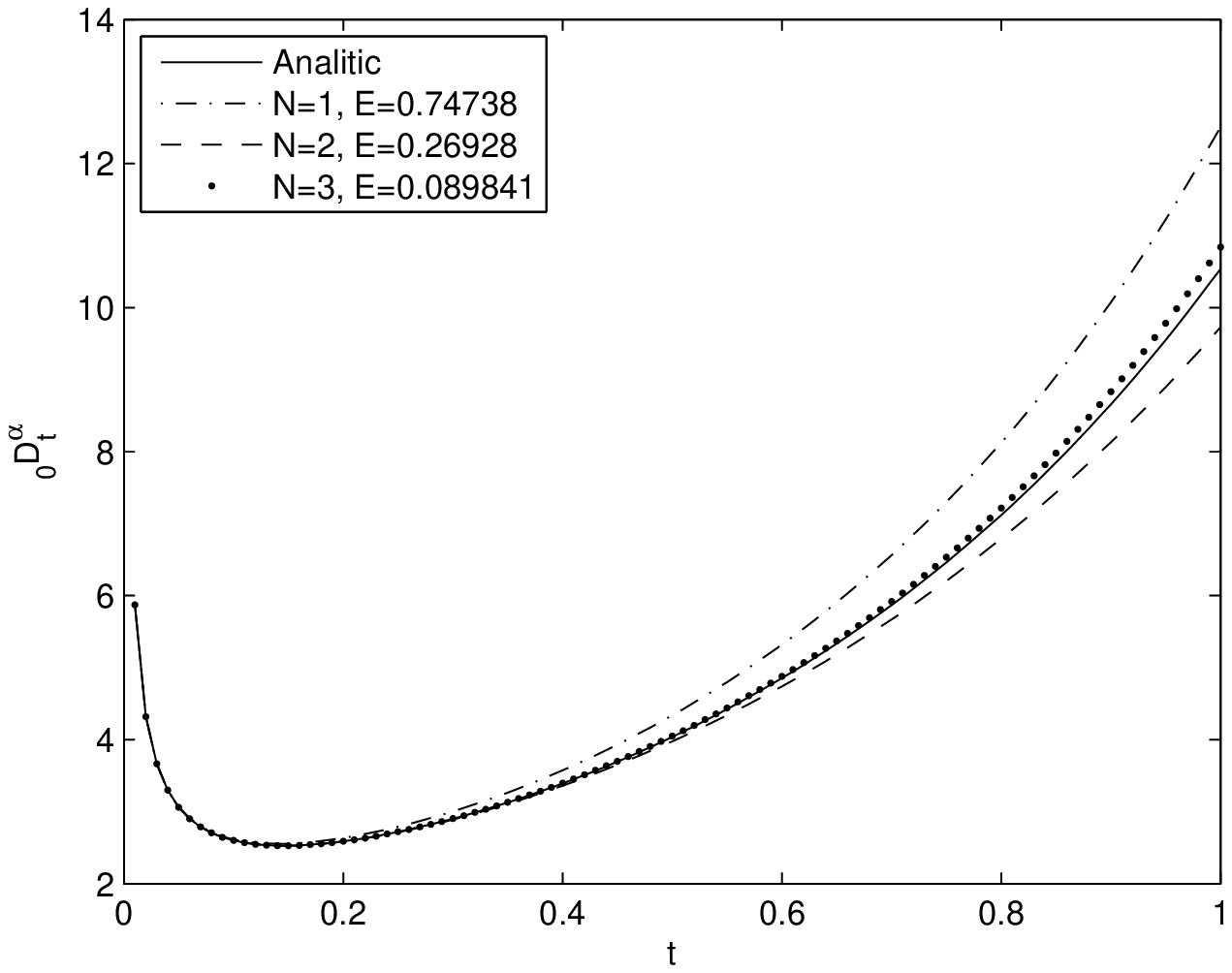}}
  \end{center}
  \caption{Analytic (solid line) versus numerical approximation \eqref{expanInt}.}
  \label{EvalInt}
\end{figure}
The same computations are carried out using approximation \eqref{expanMom}.
In this case, given a function $x(\cdot)$, we can compute $V_p$ by definition
or integrate the system \eqref{sysVp} analytically or by any numerical integrator.
As it is clear from Figure~\ref{EvalMom},
one can get better results by using larger values of $N$.
\begin{figure}[!ht]
  \begin{center}
    \subfigure[$\LDz(t^4)$]{\includegraphics[scale=0.5]{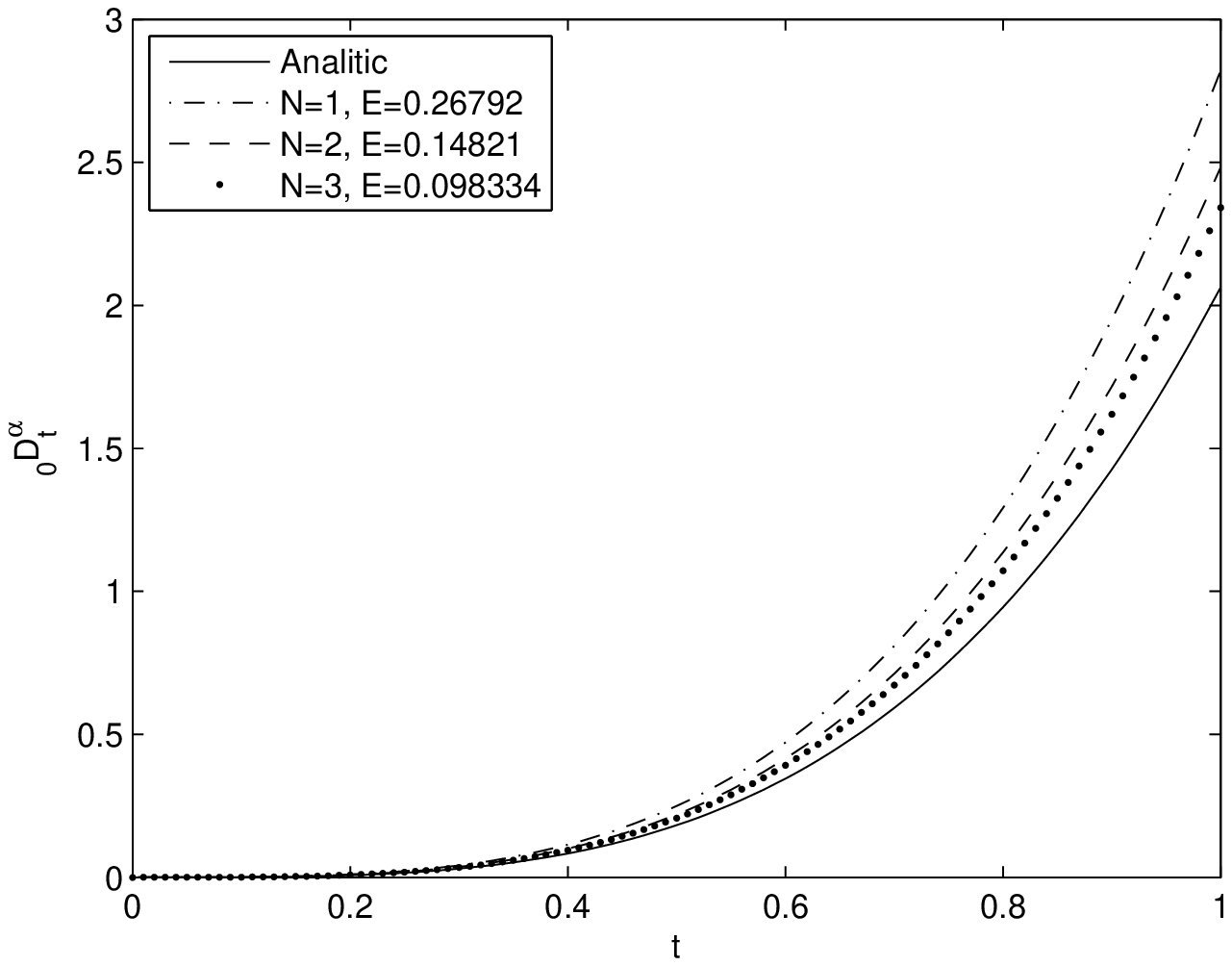}}
    \subfigure[$\LDz(e^{2t})$]{\includegraphics[scale=0.5]{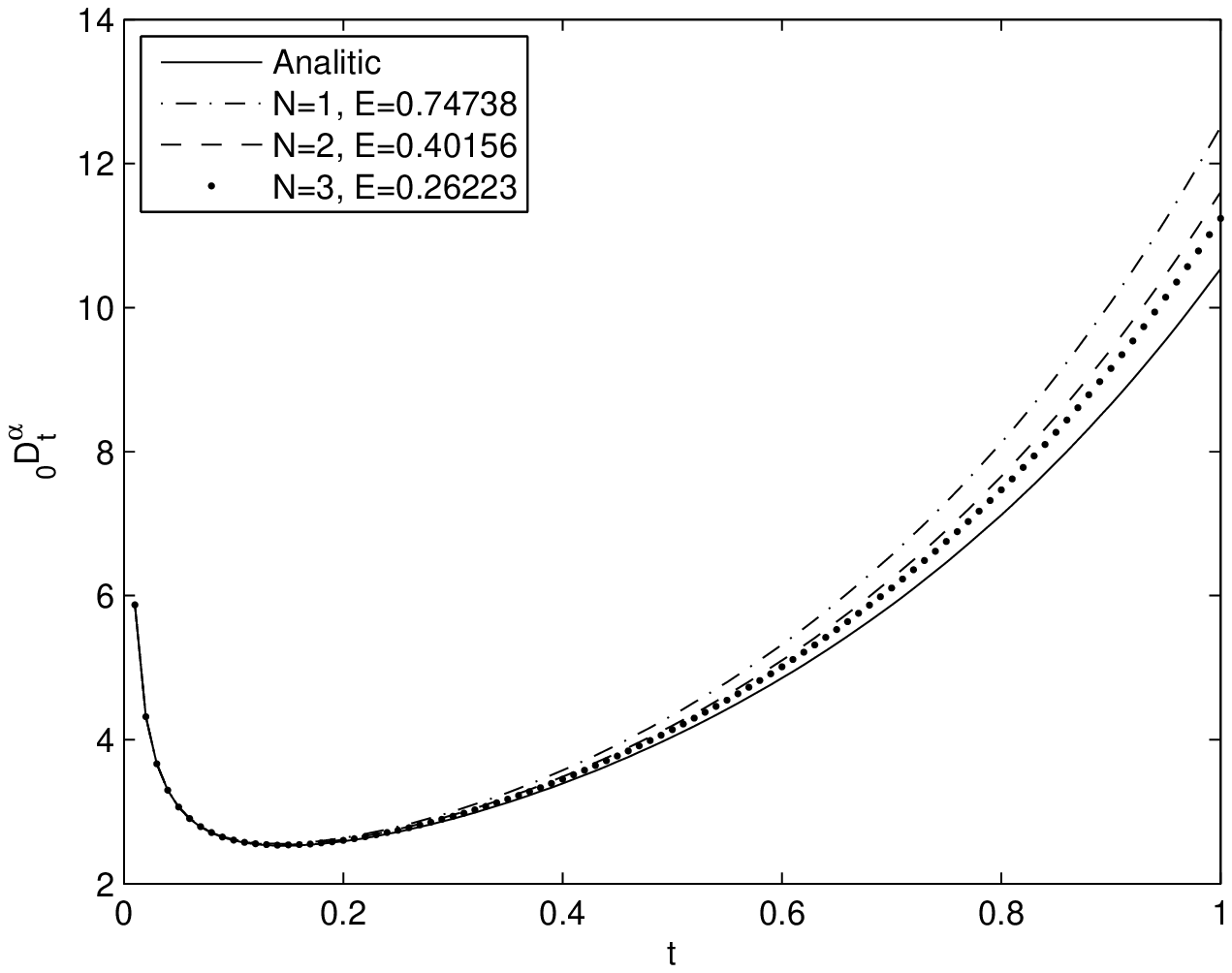}}
  \end{center}
  \caption{Analytic  (solid line) versus numerical approximation \eqref{expanMom}.}
  \label{EvalMom}
\end{figure}
Comparing Figures~\ref{EvalInt} and \ref{EvalMom}, we find out that the approximation
\eqref{expanInt} shows a faster convergence. Observe that both functions are analytic
and it is easy to compute higher-order derivatives. The approximation \eqref{expanInt}
fails for non-analytic functions as stated in \cite{Atan2}.

\begin{remark}
\label{EndPoints}
A closer look to \eqref{expanInt} and \eqref{expanMom} reveals
that in both cases the approximations are not computable
at $a$ and $b$ for the left and right fractional derivatives, respectively.
At these points we assume that it is possible to extend them continuously
to the closed interval $[a,b]$.
\end{remark}

In what follows, we show that by omitting the first derivative
from the expansion, as done in \cite{Atan2}, one may loose
a considerable accuracy in computation. Once again, we compute
the fractional derivatives of $x(t)=t^4$ and $x(t)=e^{2t}$,
but this time we use the approximation given by \eqref{expanAtan}.
Figure~\ref{compareEval} summarizes the results. Our expansion
gives a more realistic approximation using quite small $N$,
$3$ in this case.
\begin{figure}[!ht]
\begin{center}
\subfigure[$\LDz(t^4)$]{\label{fig2a}\includegraphics[scale=0.5]{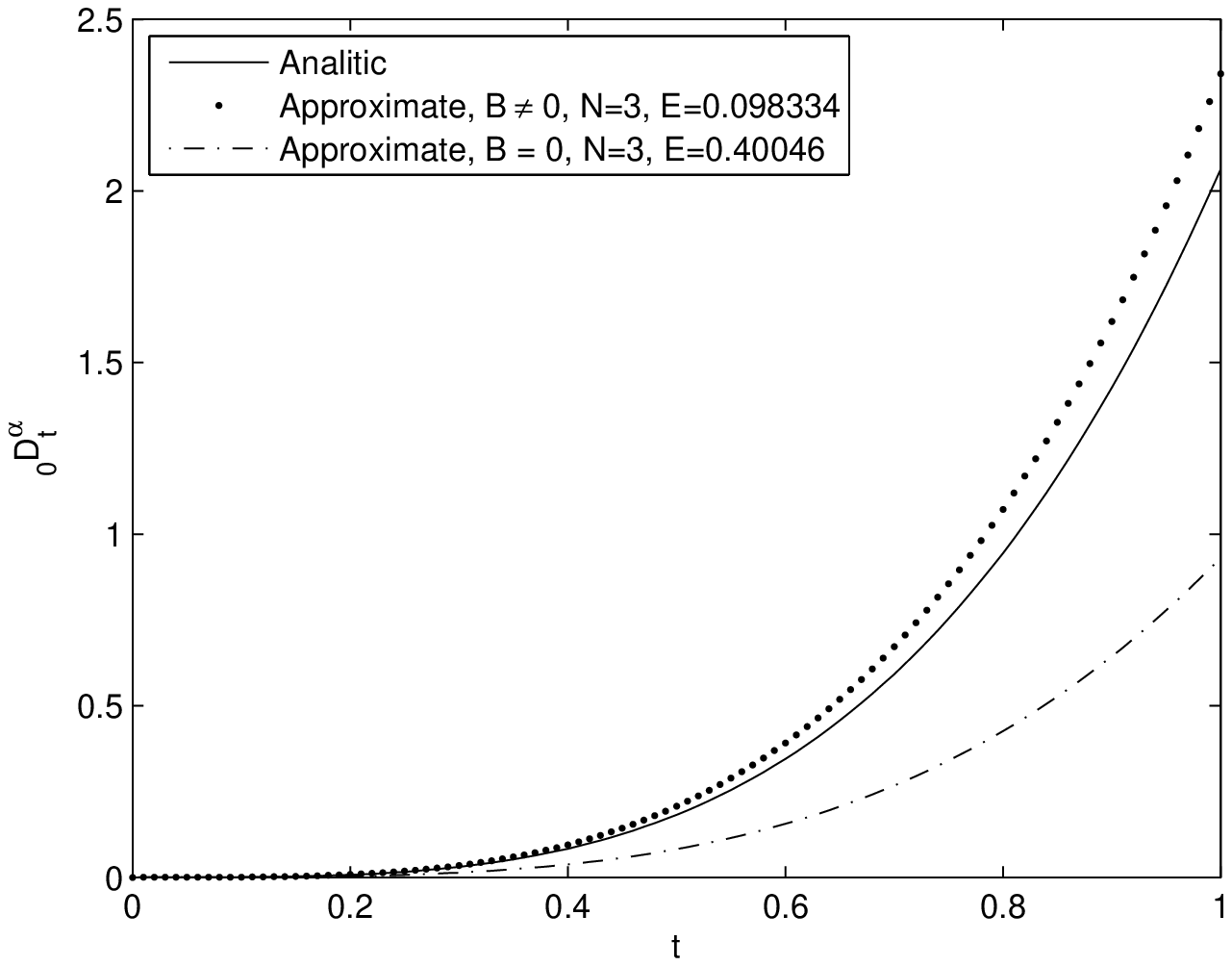}}
\subfigure[$\LDz(e^{2t})$]{\label{fig2d}\includegraphics[scale=0.5]{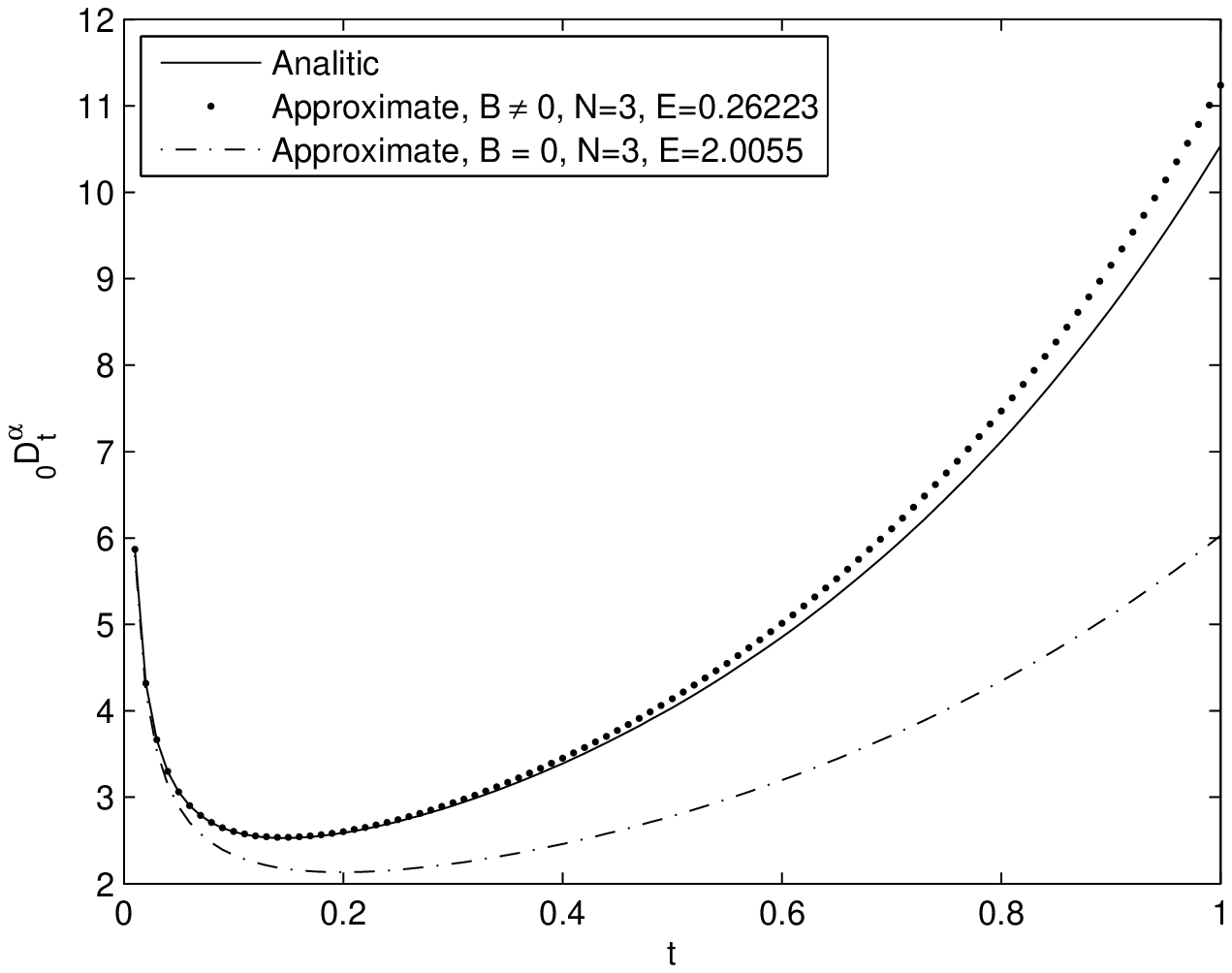}}
\end{center}
\caption{Comparison of approximation \eqref{expanMom} proposed here
and approximation \eqref{expanAtan} of \cite{Atan2}}
\label{compareEval}
\end{figure}
To show how the appearance of higher-order derivatives in generalization \eqref{Gen}
gives better results, we evaluate fractional derivatives of $x(t)=t^4$ and $x(t)=e^{2t}$
for different values of $n$. We consider $n=1,2,3$, $N=6$ for $x(t)=t^4$
(Figure~\ref{ExpEt}) and $N=4$ for $x(t)=e^{2t}$ (Figure~\ref{ExpExn}).
\begin{figure}[!ht]
\begin{center}
\subfigure[$\LDz(t^4)$]{\label{ExpEt}\includegraphics[scale=0.5]{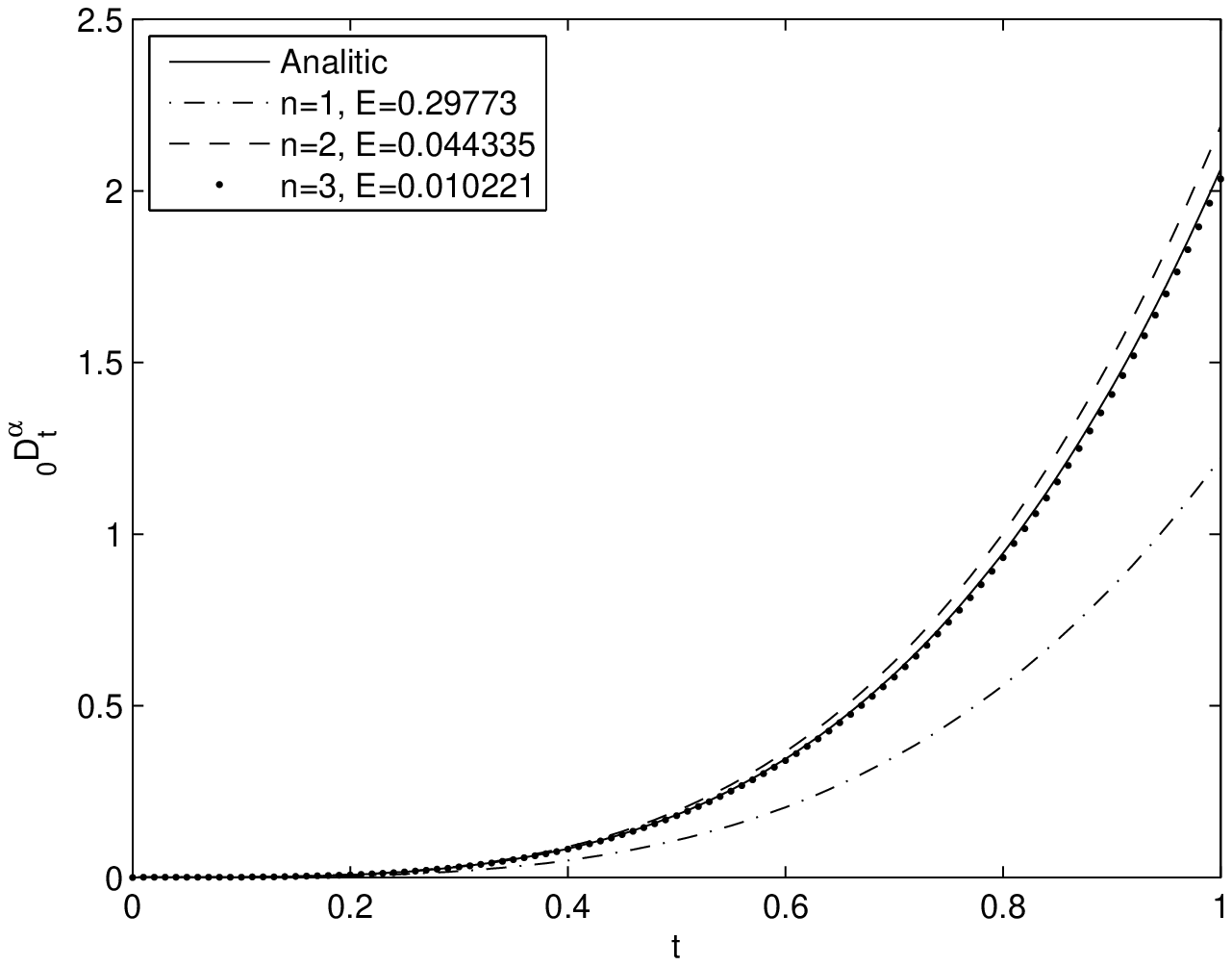}}
\subfigure[$\LDz(e^{2t})$]{\label{ExpExn}\includegraphics[scale=0.5]{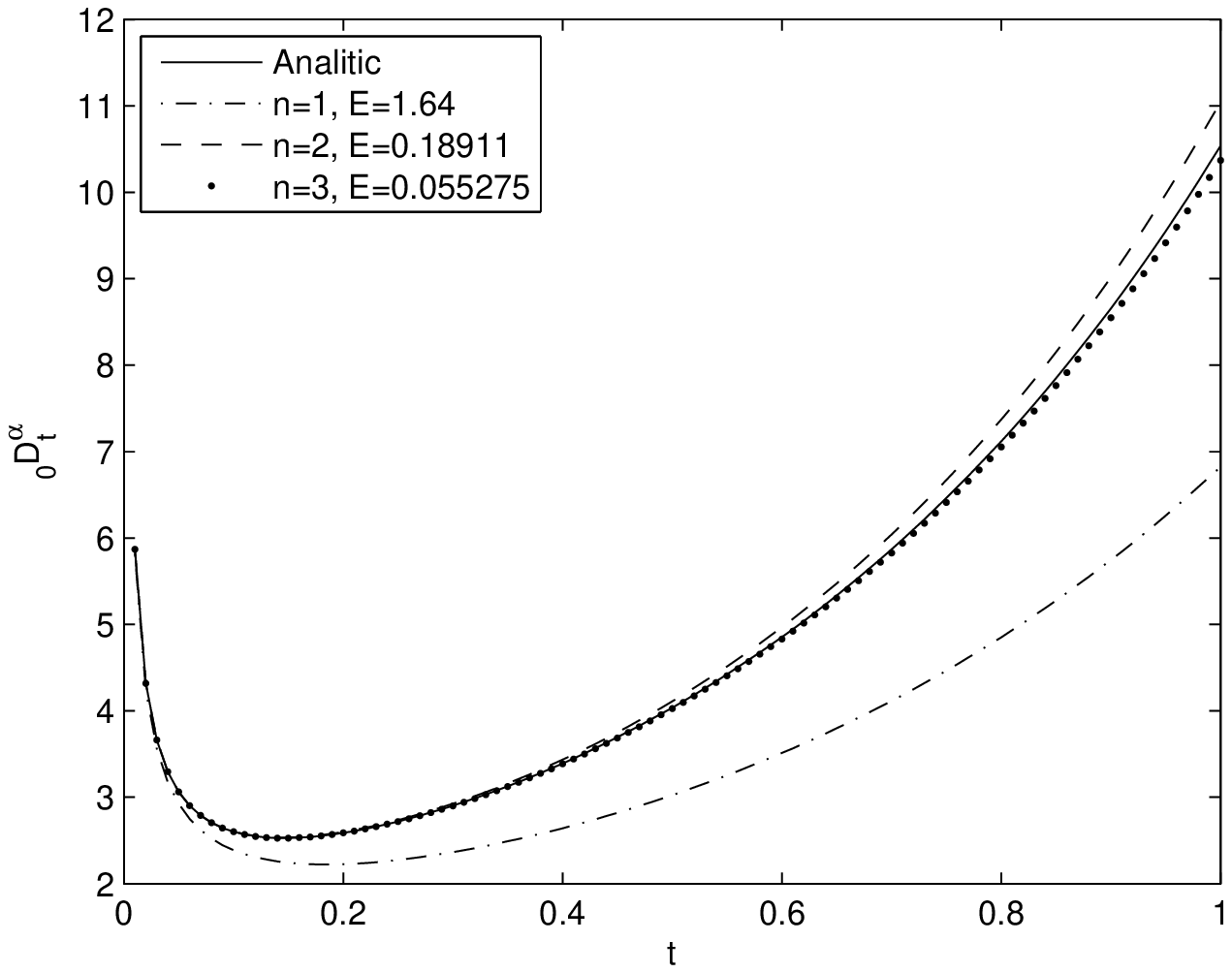}}
\end{center}
\caption{Analytic (solid line) versus numerical approximation \eqref{Gen}.}
\label{IntExp}
\end{figure}


\subsection{Fractional derivatives of tabular data}

In many applications (see Section~\ref{FCOV}),
the function itself is not accessible in a closed form, but
as a tabular data for discrete values of the independent variable.
Thus, we cannot use the definition to compute the fractional derivative directly.
Our approximation \eqref{expanMom}, that uses the function and its first derivative
to evaluate the fractional derivative, seems to be a good candidate in those cases.
Suppose that we know the values of $x(t_i)$ on $n+1$ distinct points in a given interval $[a,b]$,
\textrm{i.e.}, for $t_i$, $i=0,1,\ldots,n$, with $t_0=a$ and $t_n=b$.
According to formula \eqref{expanMom}, the value of the fractional derivative
of $x(\cdot)$ at each point $t_i$ is given approximately by
$$
\LDa x(t_i)\simeq A(\a,N)(t_i-a)^{-\a}x(t_i)
+B(\a,N)(t_i-a)^{1-\a}\dot{x}(t_i)
-\sum_{p=2}^NC(p,\a)(t_i-a)^{1-p-\a}V_p(t_i).
$$
The values of $x(t_i)$, $i=0,1,\ldots,n$, are given. A good approximation for
$\dot{x}(t_i)$ can be obtained using the forward,
centered, or backward difference approximation
of the first-order derivative \cite{Stoer}. For $V_p(t_i)$ one can either
use the definition and compute the integral numerically,
\textrm{i.e.}, $V_p(t_i)=\int_a^{t_i} (1-p)(\tau-a)^{p-2}x(\tau)d\tau$,
or it is possible to solve \eqref{sysVp} as an initial value problem.
All required computations are straightforward and only need
to be implemented with the desired accuracy. The only thing to take
care is the way of choosing a good order, $N$, in the formula \eqref{expanMom}.
Because no value of $N$, guaranteeing the error
to be smaller than a certain preassigned number, is known a priori,
we start with some prescribed value for $N$ and increase it step by step.
In each step we compare, using an appropriate norm,
the result with the one of previous step.
For instance, one can use the Euclidean norm
$\|(\LDa)^{new}-(\LDa)^{old} \|_2$ and terminate the procedure
when it's value is smaller than a predefined $\epsilon$.
For illustrative purposes, we compute the fractional derivatives
of order $\a=0.5$ for tabular data extracted from $x(t)=t^4$ and $x(t)=e^{2t}$.
The results are given in Figure~\ref{tabular}.
\begin{figure}[!ht]
  \begin{center}
    \subfigure[$\LDz(t^4)$]{\label{fig5a}\includegraphics[scale=0.5]{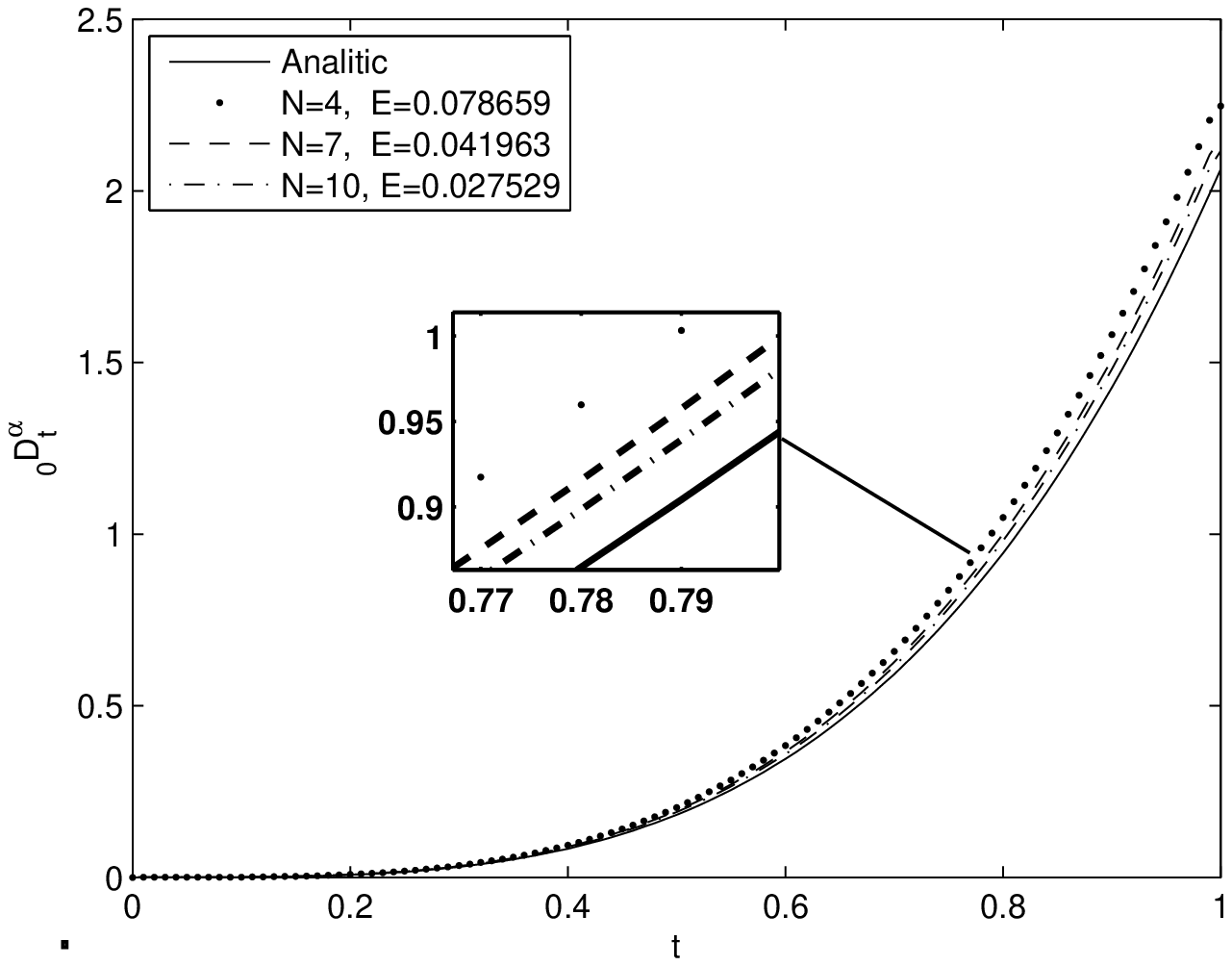}}
    \subfigure[$\LDz(e^{2t})$]{\label{fig5d}\includegraphics[scale=0.5]{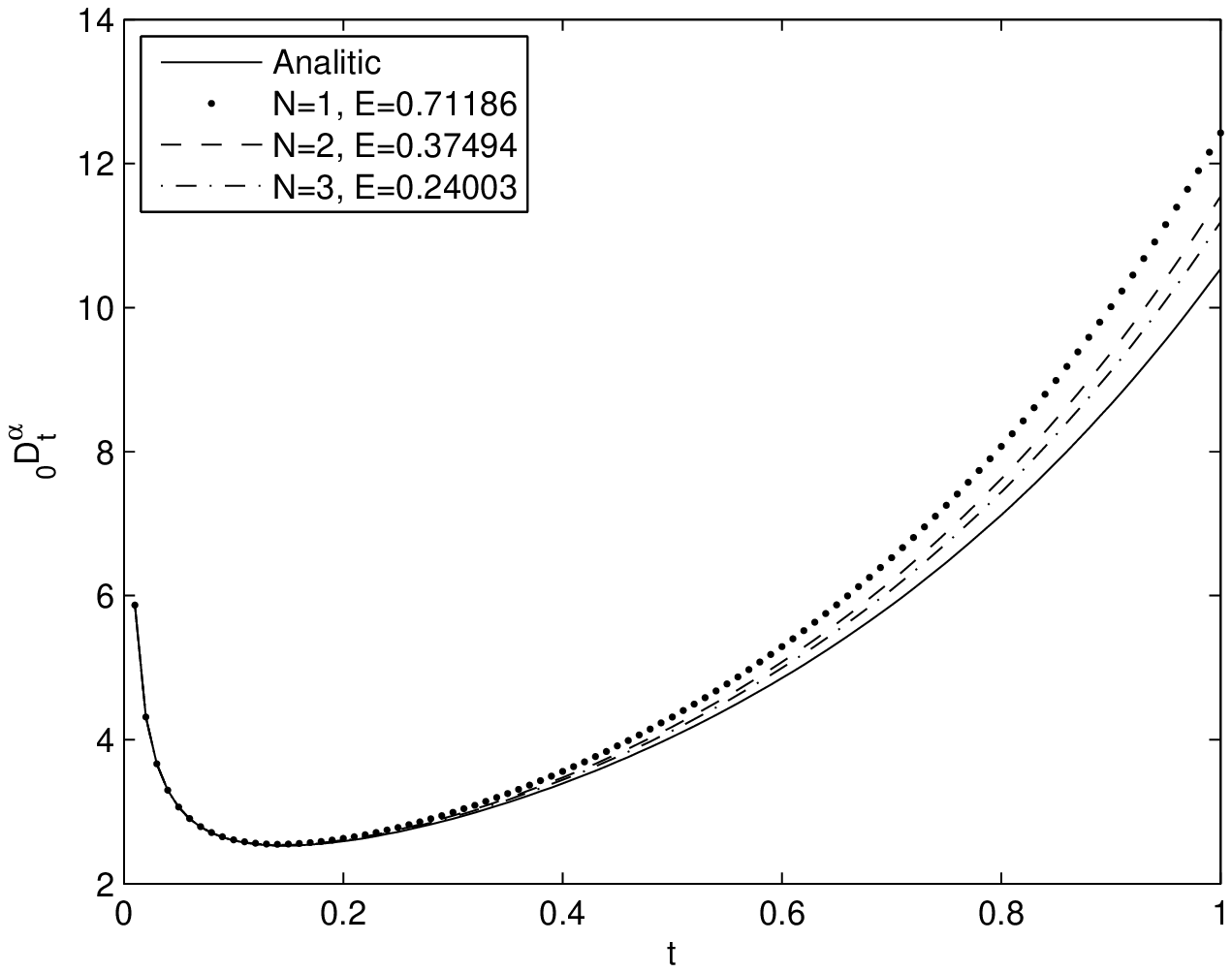}}
  \end{center}
  \caption{Fractional derivatives of tabular data}
  \label{tabular}
\end{figure}


\section{Numerical solution to fractional differential equations}

The classical theory of ordinary differential equations is a well developed field
with many tools available for numerical purposes. Using the approximations
\eqref{expanInt} and \eqref{expanMom}, one can transform a fractional
ordinary differential equation into a classical ODE.

We should mention here that, using \eqref{expanInt}, derivatives
of higher-order appear in the resulting ODE, while we only have
a limited number of initial or boundary conditions available.
In this case the value of $N$, the order of approximation,
should be equal to the number of given conditions.
If we choose a larger $N$, we will encounter
lack of initial or boundary conditions. This problem is not present
in the case in which we use the approximation \eqref{expanMom},
because the initial values for the auxiliary variables $V_p$, $p=2,3,\ldots$,
are known and we don't need any extra information.

Consider, as an example, the following initial value problem:
\begin{equation}
\label{ex1}
\left\{
\begin{array}{l}
\LDz x(t)+x(t)=t^2+\frac{2}{\Gamma(2.5)}t^{\frac{3}{2}},\\
x(0)=0.
\end{array}
\right.
\end{equation}
We know that $\LDz (t^2)=\frac{2}{\Gamma(2.5)}t^{\frac{3}{2}}$.
Therefore, the analytic solution for system \eqref{ex1} is $x(t)=t^2$.
Because only one initial condition is available, we can only expand
the fractional derivative up to the first derivative
in \eqref{expanInt}. One has
\begin{equation}
\label{ExampOdeInt}
\left\{
\begin{array}{l}
1.5642~t^{-0.5}x(t)+0.5642~t^{0.5}\dot{x}(t)=t^2+1.5045~t^{1.5},\\
x(0)=0.
\end{array}
\right.
\end{equation}
This is a classical initial value problem
and can be easily treated numerically.
The solution is drawn in Figure~\ref{odeInt}.
As expected, the result is not satisfactory.
Let us now use the approximation given by \eqref{expanMom}.
The system in \eqref{ex1} becomes
\begin{equation}
\label{ex2}
\left\{
\begin{array}{l}
A(N)t^{-0.5}x(t)+B(N)t^{0.5}\dot{x}(t)
-\sum_{p=2}^NC(p)t^{0.5-p}V_p+x(t)=t^2+\frac{2}{\Gamma(2.5)}t^{1.5},\\
\dot{V}_p(t)=(1-p)(t-a)^{p-2}x(t), \quad p=2,3,\ldots,N,\\
x(0)=0,\\
V_p(0)=0, \quad p=2,3,\ldots,N.
\end{array}
\right.
\end{equation}
We solve this initial value problem for $N=7$.
The Matlab \textsf{ode45} built-in function is used
to integrate system \eqref{ex2}. The solution is given
in Figure~\ref{odeVp} and shows a better approximation
when compared with \eqref{ExampOdeInt}.
\begin{figure}[!ht]
\begin{center}
\subfigure[Exact versus Approximation
\eqref{expanInt}]{\label{odeInt}\includegraphics[scale=0.5]{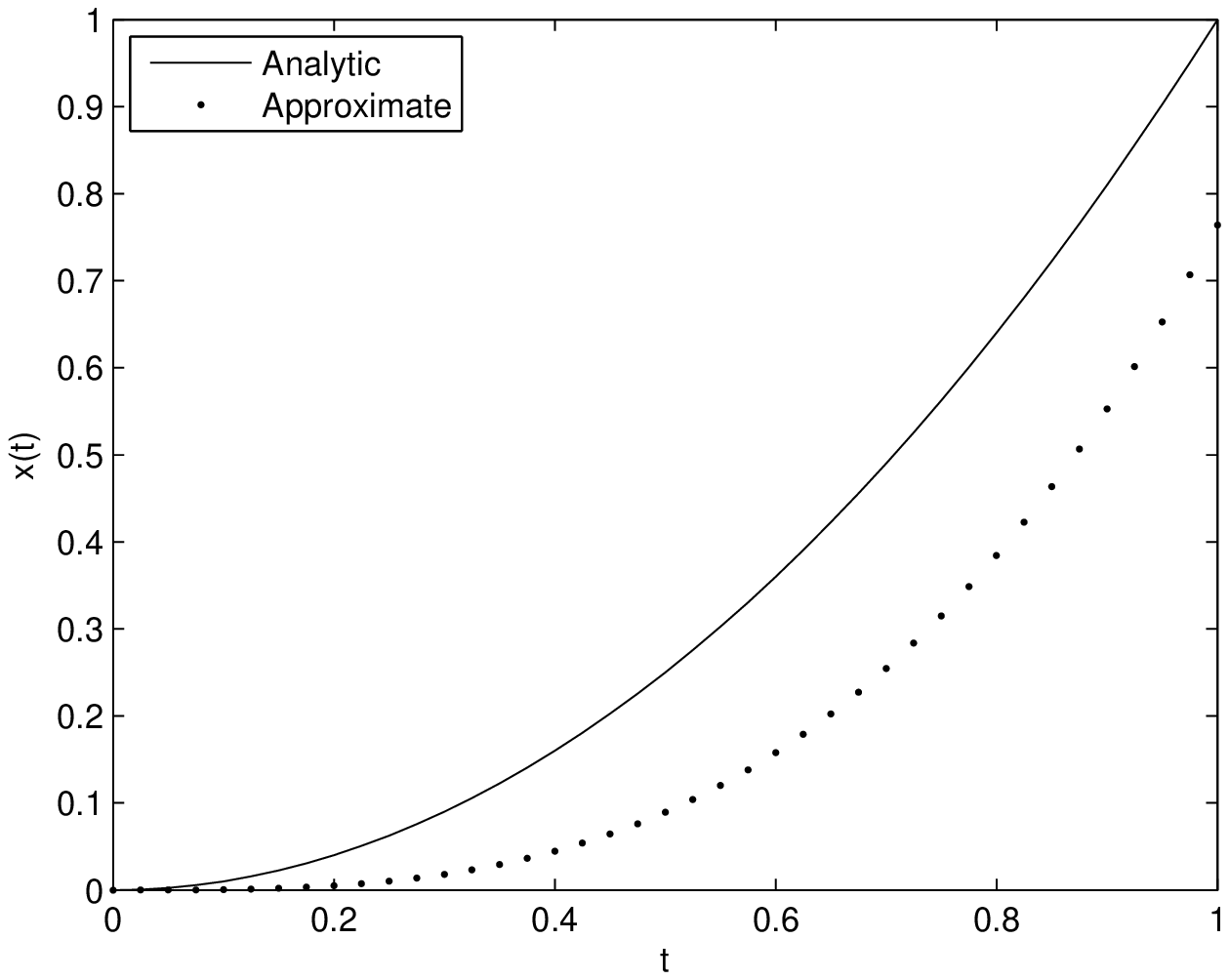}}
\subfigure[Exact versus Approximation \eqref{expanMom}]{\label{odeVp}
\includegraphics[scale=0.5]{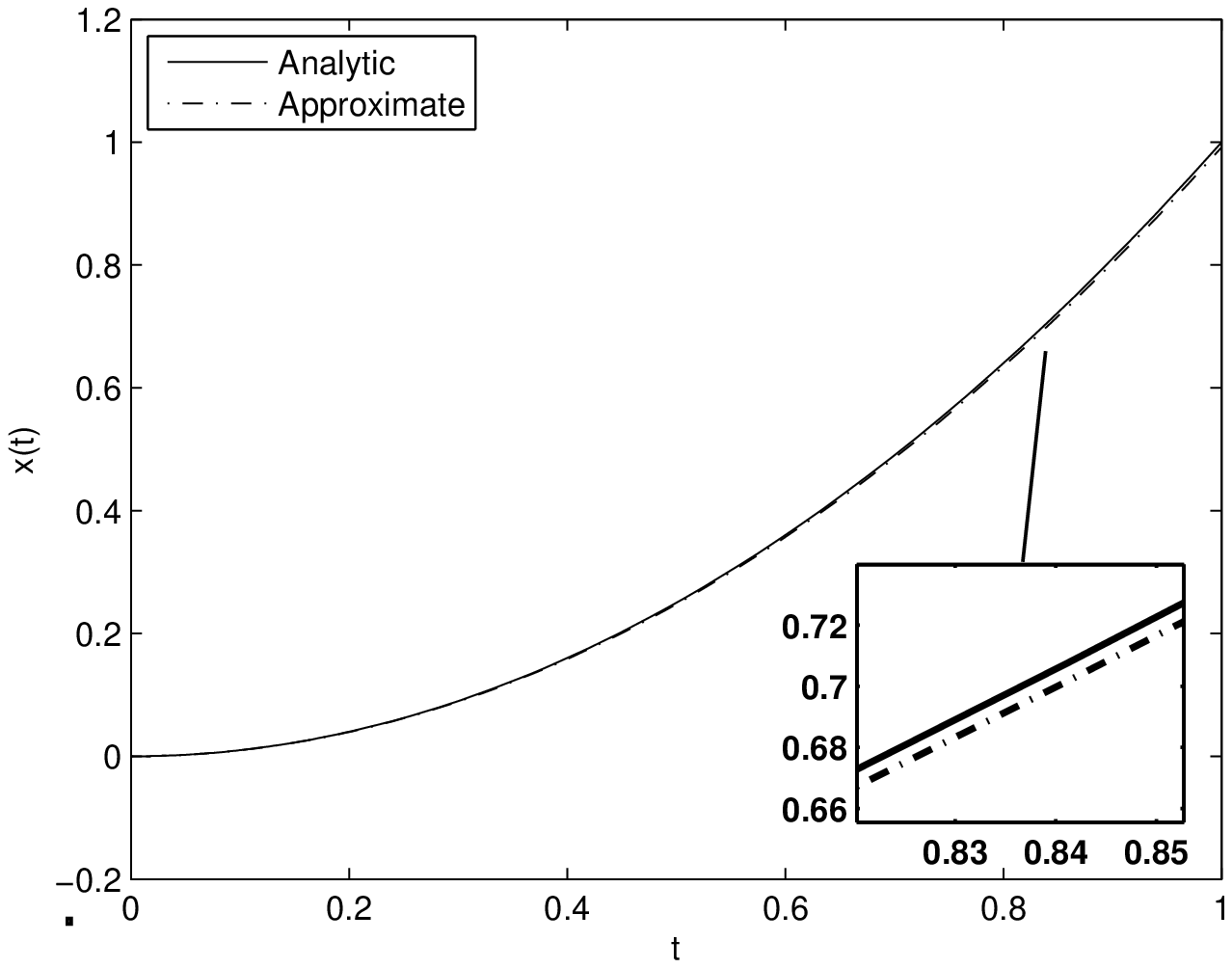}}\\
\end{center}
\caption{Two approximations applied to fractional differential equation \eqref{ex1}}
\end{figure}

\begin{remark}
To show the difference caused by the appearance of the first derivative
in formula \eqref{expanMom}, we solve the initial value problem \eqref{ex1}
with $B(\a,N)=0$. Since the original fractional differential equation
does not depend on integer order derivatives of function $x(\cdot)$,
\textrm{i.e.}, it has the form
$$
\LDa x(t)+f(x,t)=0,
$$
by \eqref{expanAtan} the dependence to derivatives of  $x(\cdot)$ vanishes.
In this case one needs to apply the operator $_aD_t^{1-\a}$
to the above equation and obtain
$$
\dot{x}(t)+_aD_t^{1-\a}[f(x,t)]=0.
$$
Nevertheless, we can use \eqref{expanMom} directly without any trouble.
Figure~\ref{cmpAtanMe} shows that at least for a moderate accurate method,
like the Matlab routine \textsf{ode45}, taking $B(\a,N)\neq 0$ into account
gives a better approximation.
\begin{figure}[!ht]
  \begin{center}
    \includegraphics[scale=0.6]{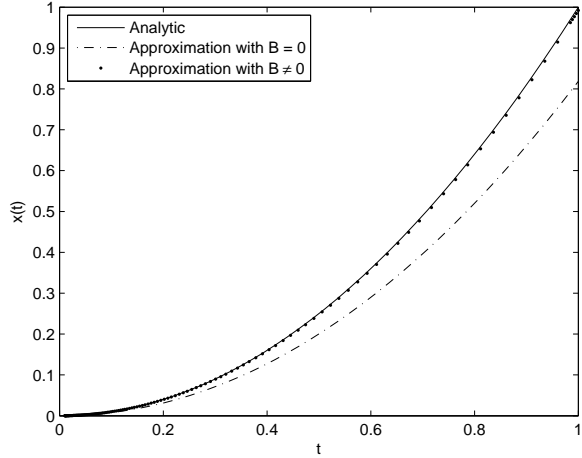}
  \end{center}
  \caption{Comparison of our approach to that of \cite{Atan2}}
  \label{cmpAtanMe}
\end{figure}
\end{remark}


\section{Application to the fractional calculus of variations}
\label{FCOV}

The fractional calculus of variations consists in the study
of dynamic optimization problems in which the objective functional
or constraints depend on derivatives and/or integrals
of non-integer order. This is a recent and promising
research subject, under strong current research (see, \textrm{e.g.},
\cite{Frederico,Malinowska,Mozyrska} and references therein).
Here we show how to use expansions to transform a fractional
problem into a classical one, where we can benefit from the vast number
of techniques available in the field. Consider the following
fractional variational problem including a left Riemann--Liouville
fractional derivative, $\LDa$, of order $\a \in (0,1)$:
\begin{equation}
\label{intJ}
\begin{aligned}
\min\quad &J[x(\cdot)]=\int_a^b L\left(t, x(t),\dot{x}(t),\LDa x(t)\right)dt\\
&x(a)=x_a,\quad x(b)=x_b.
\end{aligned}
\end{equation}
One can deduce fractional necessary optimality equations
to problem \eqref{intJ} of Euler--Lagrange type \cite{ID:207,Tatiana}:
if $x(\cdot)$ is a solution to problem \eqref{intJ}, then
it satisfies the fractional Euler--Lagrange equation
\begin{equation}
\label{myEL}
\frac{\partial L}{\partial x}+\RD \frac{\partial L}{\partial \LDa}
-\frac{d}{dt}\left(\frac{\partial L}{\partial \dot{x}}\right)=0.
\end{equation}

There are a few attempts in the literature
to present analytic solutions to fractional variational problems. Simple problems
have been treated in \cite{AlD1}; some other examples are presented in \cite{AtanHam}.
In this work we use the two expansions discussed in Section~\ref{secexpan}
to reduce a fractional problem to a problem with derivatives of integer order.
In order to illustrate the usefulness of our ideas in the area of the calculus
of variations, we need to consider problems \eqref{intJ} with a known exact solution.
Examples~\ref{Ex51} and \ref{Ex52} below are suitable for our purposes,
since the analytic solutions can be easily obtained.
Knowing the exact solutions, we compare the effectiveness
of different approximation methods.

\begin{example}
\label{Ex51}
Let $\a\in (0,1)$. Consider the following minimization problem:
\begin{equation}
\label{exCOV}
\begin{aligned}
\min\quad &J[x(\cdot)]
=\int_0^1 [\LD x(t)-\dot{x}^2(t)]dt\\
&x(0)=0,\quad x(1)=1.
\end{aligned}
\end{equation}
In this case the Euler--Lagrange equation \eqref{myEL} gives
$$
_tD_1^{\a} 1+2\ddot{x}(t)=0,~\text{or}~ \ddot{x}(t)
=-\frac{1}{2\Gamma(1-\a)}(1-t)^{-\a},
$$
which subject to the given boundary conditions has solution
\begin{equation}
\label{solEx51}
x(t)=-\frac{1}{2\Gamma(3-\a)}(1-t)^{2-\a}
+\left(1-\frac{1}{2\Gamma(3-\a)}\right)t+\frac{1}{2\Gamma(3-\a)}.
\end{equation}
\end{example}

The Lagrangian in Example~\ref{Ex51} is linear
with respect to the fractional derivative. This linearity
makes the fractional Euler--Lagrange equation easy to solve.
In a slightly different situation, \textrm{e.g.} Example~\ref{Ex52},
there is no well-known methods to solve the Euler--Lagrange
equation \eqref{myEL}.

\begin{example}
\label{Ex52}
Given $\a\in (0,1)$, consider now the functional
\begin{equation}
\label{Ex52f}
J[x(\cdot)]=\int_0^1 (\LD x(t)-1)^2dt,
\end{equation}
to be minimized subject to the boundary conditions
$x(0)=0$ and $x(1)=\frac{1}{\Gamma(\a+1)}$.
Since the integrand in \eqref{Ex52f} is non-negative,
the functional attains its minimum when $\LD x(t)=1$,
\textrm{i.e.}, for $x(t)=\frac{t^{\a}}{\Gamma(\a+1)}$.
\end{example}


\subsection{Numerical solutions to Example~\ref{Ex51}}
\label{subsec:nsEx51}

We use two different approaches.

\subsubsection{Expansion to integer orders}

Using approximation \eqref{expanInt} for the fractional derivative
in \eqref{exCOV}, we get the approximated problem
\begin{equation}
\label{expanCOV}
\begin{aligned}
\min\quad &\tilde{J}[x(\cdot)]
=\int_0^1 \left[\sum_{n=0}^N
C(n,\a)t^{n-\a}x^{(n)}(t)-\dot{x}^2(t)\right]dt\\
&x(0)=0,\quad x(1)=1,
\end{aligned}
\end{equation}
which is a classical higher-order problem of the calculus of variations
that depends on derivatives up to order $N$.
The corresponding necessary optimality condition
is a well-known result.

\begin{thm}[\textrm{cf.}, \textrm{e.g.}, \cite{Lebedev}]
Suppose that $x(\cdot)\in C^{2N}[a,b]$ minimizes
$$
\int_a^b L(t,x(t),x^{(1)}(t),x^{(2)}(t),\ldots,x^{(N)}(t))dt
$$
with given boundary conditions
\begin{eqnarray*}
x(a)=a_0, & &x(b)=b_0,\\
x^{(1)}(a)=a_1, & &x^{(1)}(b)=b_1,\\
&\vdots&\\
x^{(N-1)}(a)=a_{N-1}, & & x^{(N-1)}(b)=b_{N-1}.
\end{eqnarray*}
Then $x(\cdot)$ satisfies the Euler--Lagrange equation
\begin{equation}
\label{ELN}
\frac{\partial L}{\partial x}-\frac{d}{dt}
\left(\frac{\partial L}{\partial x^{(1)}}\right)
+\frac{d^2}{dt^2}\left(\frac{\partial L}{\partial x^{(2)}}\right)
-\cdots+(-1)^N\frac{d^N}{dt^N}\left(\frac{\partial L}{\partial x^{(N)}}\right)=0.
\end{equation}
\end{thm}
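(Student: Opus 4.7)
The plan is to carry out the standard variational argument, adapted to the higher-order setting. First I would introduce a one-parameter family of admissible variations $\tilde{x}(t) = x(t) + \epsilon \eta(t)$, where $\eta \in C^{2N}[a,b]$ is an arbitrary test function satisfying $\eta^{(k)}(a) = \eta^{(k)}(b) = 0$ for every $k = 0, 1, \ldots, N-1$. These $2N$ vanishing conditions are precisely what is needed to guarantee that $\tilde{x}$ satisfies the same $2N$ boundary conditions as $x$ for every value of $\epsilon$, so $\tilde{x}$ is admissible.

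Since $x$ minimizes the functional, the real-valued function $\epsilon \mapsto J[x + \epsilon \eta]$ attains a minimum at $\epsilon = 0$, hence its derivative vanishes there. Differentiating under the integral sign, justified by smoothness of $L$ and compactness of $[a,b]$, yields the first variation
$$
\int_a^b \sum_{k=0}^{N} \frac{\partial L}{\partial x^{(k)}} \, \eta^{(k)}(t)\, dt = 0.
$$

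The central step is to integrate by parts the $k$-th summand exactly $k$ times, transferring all derivatives from $\eta$ onto $\partial L / \partial x^{(k)}$. Each integration produces boundary terms of the form $\bigl[\,\text{(something)}\cdot \eta^{(j)}\,\bigr]_a^b$ with $0 \le j \le k-1$, and all of these vanish by the chosen boundary conditions on $\eta$. The hypothesis $x \in C^{2N}[a,b]$ (together with the implicit smoothness of $L$) ensures that $\partial L / \partial x^{(k)}$ is $k$-times continuously differentiable in $t$, so repeated differentiation is legitimate. After collecting terms, the identity becomes
$$
\int_a^b \left[ \sum_{k=0}^{N} (-1)^k \frac{d^k}{dt^k} \frac{\partial L}{\partial x^{(k)}} \right] \eta(t)\, dt = 0
$$
for every admissible $\eta$.

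The conclusion then follows from the fundamental lemma of the calculus of variations applied to the restricted test class consisting of functions vanishing together with their first $N-1$ derivatives at $a$ and $b$: this class is still dense enough to force the bracketed expression to vanish identically on $[a,b]$, yielding \eqref{ELN}. The main obstacle is the bookkeeping of the boundary contributions produced by the $k$ successive integrations by parts on each summand; once one sees that the imposed boundary conditions on $\eta^{(j)}$ are exactly tailored to kill all of them, the argument proceeds routinely. A secondary subtlety is justifying the fundamental lemma for this restricted class of test functions, but this is standard and can be done by the usual bump-function construction.
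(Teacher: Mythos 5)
Your proof is correct and complete: the variation argument, the $k$-fold integration by parts with boundary terms killed by the conditions $\eta^{(j)}(a)=\eta^{(j)}(b)=0$ for $j=0,\ldots,N-1$, and the fundamental lemma for this restricted test class together give \eqref{ELN}, with the hypothesis $x(\cdot)\in C^{2N}[a,b]$ justifying the repeated differentiations. The paper itself supplies no proof of this theorem --- it is stated as a classical result with a citation to \cite{Lebedev} --- and your argument is exactly the standard one found there, so there is nothing to reconcile.
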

In general \eqref{ELN} is an ODE of order $2N$, depending
on the order $N$ of the approximation we choose,
and the method leaves $2N-2$ parameters unknown. In our example,
however, the Lagrangian in \eqref{expanCOV} is linear
with respect to all derivatives of order higher than two.
The resulting Euler--Lagrange equation is the second order ODE
\begin{equation*}
\sum_{n=0}^N (-1)^nC(n,\a)\frac{d^n}{dt^n}(t^{n-\a})
-\frac{d}{dt}\left[-2\dot{x}(t)\right]=0
\end{equation*}
that has solution
\begin{multline*}
x(t)=-\frac{1}{2\Gamma(3-\a)}\left[\sum_{n=0}^N
(-1)^n\Gamma(n+1-\a)C(n,\a)\right]t^{2-\a}\\
+\left[1+\frac{1}{2\Gamma(3-\a)}\sum_{n=0}^N
(-1)^n\Gamma(n+1-\a)C(n,\a)\right]t.
\end{multline*}
Figure~\ref{expIntFig} shows the analytic solution together with several
approximations. It reveals that by increasing $N$, approximate solutions
do not converge to the analytic one. The reason is the fact that the solution
\eqref{solEx51} to Example~\ref{Ex51} is not an analytic function.
We conclude that \eqref{expanInt} may not be a good choice
to approximate fractional variational problems. In contrast, as we shall see,
the approximation \eqref{expanMom} introduced in this paper leads to good results.
\begin{figure}
\begin{center}
\includegraphics[scale=.6]{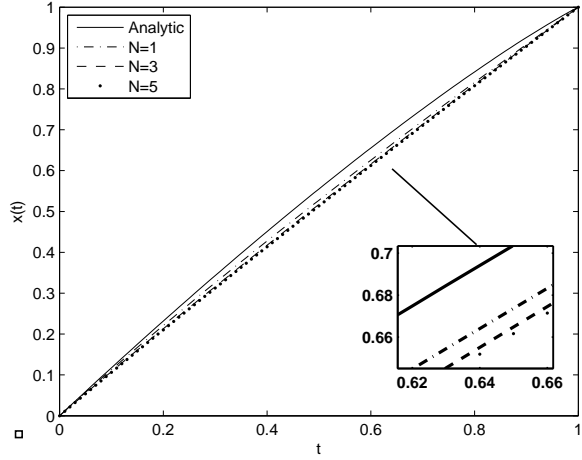}
\caption{Analytic vs. approximate solutions to Example~\ref{Ex51}
using approximation \eqref{expanInt}.}
\label{expIntFig}
\end{center}
\end{figure}


\subsubsection{Expansion through the moments of a function}

If we use \eqref{expanMom} to approximate the optimization problem \eqref{exCOV}, we have
\begin{equation}
\label{momCOV}
\begin{split}
\tilde{J}[x(\cdot)]&=\int_0^1 \left[A(\a,N)t^{-\a}x(t)
+B(\a,N)t^{1-\a}\dot{x}(t)-\sum_{p=2}^N
C(\a,p)t^{1-p-\a}V_p(t)-\dot{x}^2(t)\right]dt,\\
\dot{V}_p(t)&=(1-p)t^{p-2}x(t),\quad p=2,3,\ldots,N,\\
V_p(0)&=0, \quad p=2,3,\ldots,N,\\
x(0)&=0,\quad x(1)=1.
\end{split}
\end{equation}
Problem \eqref{momCOV} is constrained with a set of ordinary differential equations
and is natural to look to it as an optimal control problem \cite{Pontryagin}.
For that we introduce the control variable $u(t) = \dot{x}(t)$. Then,
using the Lagrange multipliers $\lambda_1, \lambda_2,\ldots,\lambda_N$,
and the Hamiltonian system, one can reduce \eqref{momCOV}
to the study of the two point boundary value problem
\begin{equation}
\label{tpbvp}
\left\{
\begin{array}{rl}
\dot{x}(t)&=\frac{1}{2}B(\a,N)t^{1-\a}-\frac{1}{2}\lambda_1(t),\\
\dot{V}_p(t)&=(1-p)t^{p-2}x(t),\quad p=2,3,\ldots,N,\\
\dot{\lambda}_1(t)&=A(\a,N)t^{-\a}-\sum_{p=2}^N(1-p)t^{p-2}\lambda_p(t),\\
\dot{\lambda}_p(t)&=-C(\a,p)t^{(1-p-\a)},\quad p=2,3,\ldots,N,\\
\end{array}\right.
\end{equation}
with boundary conditions
\begin{equation*}
\left\{
\begin{array}{l}
x(0)=0, \\
V_p(0)=0,\quad p=2,3,\ldots,N,
\end{array}
\right.\qquad\left\{
\begin{array}{l}
x(1)=1,\\
\lambda_p(1)=0,\quad p=2,3,\ldots,N,
\end{array}
\right.
\end{equation*}
where $x(0)=0$ and $x(1)=1$ are given. We have $V_p(0)=0$, $p=2,3,\ldots,N$,
due to \eqref{sysVp} and $\lambda_p(1)=0$, $p=2,3,\ldots,N$,
because $V_p$ is free at final time for $p=2,3,\ldots,N$ \cite{Pontryagin}.
In general, the Hamiltonian system is a nonlinear, hard to solve, two point boundary
value problem that needs special numerical methods. In this case, however, \eqref{tpbvp}
is a non-coupled system of ordinary differential equations and is easily solved to give
\begin{equation*}
x(t)=M(\a,N)t^{2-\a}-\sum_{p=2}^N
\frac{C(\a,p)}{2p(2-p-\a)}t^p+\left[ 1-M(\a,N)
+\sum_{p=2}^N \frac{C(\a,p)}{2p(2-p-\a)}\right]t,
\end{equation*}
where
$$
M(\a,N)=\frac{1}{2(2-\a)}\left[ B(\a,N)-\frac{A(\a,N)}{1-\a}
-\sum_{p=2}^N \frac{C(\a,p)(1-p)}{(1-\a)(2-p-\a)} \right].
$$
Figure~\ref{expMomFig} shows the graph of $x(\cdot)$
for different values of $N$.
\begin{figure}
\begin{center}
\includegraphics[scale=.6]{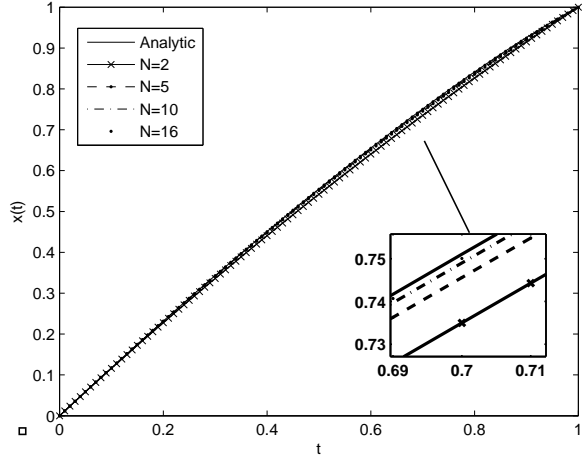}
\caption{Analytic vs. approximate solutions to Example~\ref{Ex51}
using approximation \eqref{expanMom}.}
\label{expMomFig}
\end{center}
\end{figure}


\subsection{Numerical solutions to Example~\ref{Ex52}}
\label{subsec:nsEx52}

Similarly to Example~\ref{Ex51}, it turns out
that expansion \eqref{expanInt} does not provide
a good method while \eqref{expanMom}
leads to good results.


\subsubsection{Expansion to integer orders}

Using \eqref{expanInt} as an approximation
for the fractional derivative in \eqref{Ex52f} gives
\begin{equation*}
\begin{aligned}
\min\quad &\tilde{J}[x(\cdot)]
=\int_0^1 \left(\sum_{n=0}^N C(n,\a)t^{n-\a}x^{(n)}(t)-1\right)^2dt,\\
&x(0)=0,\quad x(1)=\frac{1}{\Gamma(\a+1)}.
\end{aligned}
\end{equation*}
The Euler--Lagrange equation \eqref{ELN}
gives a $2N$ order ODE. For $N \ge 2$ this approach is inappropriate
since the two given boundary conditions $x(0)=0$ and $x(1)=\frac{1}{\Gamma(\a+1)}$
are not enough to determine the $2N$ constants of integration.


\subsubsection{Expansion through the moments of a function}

Let us approximate Example~\ref{Ex52} using \eqref{expanMom}.
The resulting minimization problem has the following form:
\begin{equation}
\label{ex52Mom}
\begin{aligned}
\min\quad &\tilde{J}[x(\cdot)]=\int_0^1 \left[A(\a,N)t^{-\a}x(t)
+B(\a,N)t^{1-\a}\dot{x}(t)-\sum_{p=2}^N
C(\a,p)t^{1-p-\a}V_p(t)-1\right]^2dt,\\
& \dot{V}_p(t)=(1-p)t^{p-2}x(t),\quad p=2,3,\ldots,N,\\
& V_p(0)=0, \quad p=2,3,\ldots,N,\\
&x(0)=0,\quad x(1)=\frac{1}{\Gamma(\a+1)}.
\end{aligned}
\end{equation}
Following the classical optimal control approach
of Pontryagin \cite{Pontryagin} as in Example~\ref{Ex51},
this time with
$$
u(t)=A(\a,N)t^{-\a}x(t)+B(\a,N)t^{1-\a}\dot{x}(t)
-\sum_{p=2}^N C(\a,p)t^{1-p-\a}V_p(t),
$$
we conclude that the solution to \eqref{ex52Mom}
satisfies the system of differential equations
\begin{equation}
\label{ex52tpbvp}
\left\{
\begin{array}{rl}
\dot{x}(t)&=-AB^{-1}t^{-1}x(t)+\sum_{p=2}^NB^{-1}C_pt^{-p}V_p(t)
+\frac{1}{2}B^{-2}t^{2\a-2}\lambda_1(t)+B^{-1}t^{\a-1},\\
\dot{V}_p(t)&=(1-p)t^{p-2}x(t),\quad p=2,3,\ldots,N,\\
\dot{\lambda}_1(t)&=AB^{-1}t^{-1}\lambda_1
-\sum_{p=2}^N(1-p)t^{p-2}\lambda_p(t),\\
\dot{\lambda}_p(t)&=-B^{-1}C(\a,p)t^{-p}\lambda_1,
\quad p=2,3,\ldots,N,\\
\end{array}\right.
\end{equation}
where $A=A(\a,N)$, $B=B(\a,N)$ and $C_p=C(\a,p)$ are defined according
to Section~\ref{secexpan}, subject to the boundary conditions
\begin{equation}
\label{sysB52}
\left\{
\begin{array}{l}
x(0)=0,\\
V_p(0)=0,\quad p=2,3,\ldots,N,
\end{array}
\right.\qquad\left\{
\begin{array}{l}
x(1)=\frac{1}{\Gamma(\a+1)},\\
\lambda_p(1)=0,\quad p=2,3,\ldots,N.
\end{array}
\right.
\end{equation}
The solution to system \eqref{ex52tpbvp}--\eqref{sysB52},
with $N=2$, is shown in Figure~\ref{expMomFig52}.
\begin{figure}
\begin{center}
\includegraphics[scale=.6]{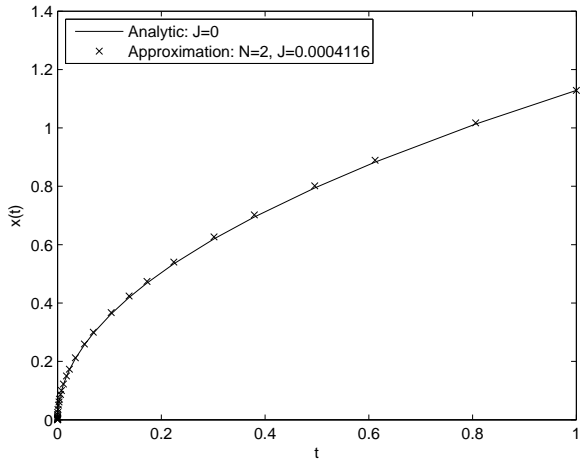}
\caption{Analytic versus approximate solution to Example~\ref{Ex52}
using approximation \eqref{expanMom}.}
\label{expMomFig52}
\end{center}
\end{figure}


\section{Conclusion}

During the last three decades,
several numerical methods have
been developed in the field of fractional calculus.
Some of their advantages, disadvantages,
and improvements, are given in \cite{sug:r3}.
Based on two continuous expansion formulas \eqref{expanIntInf}
and \eqref{expanMomInf} for the left Riemann--Liouville fractional derivative,
we studied two approximations \eqref{expanInt} and \eqref{expanMom} and their applications
in the computation of fractional derivatives. Despite the fact that
the approximation \eqref{expanInt} encounters some difficulties from the presence
of higher-order derivatives, it exhibits better results regarding convergence.
Approximation \eqref{expanMom} can also be generalized to include
higher-order derivatives in the form of \eqref{Gen}. The possibility of using \eqref{expanMom}
to compute fractional derivatives for a set of tabular data was discussed.
Fractional differential equations are also treated successfully. In this case the lack
of initial conditions makes \eqref{expanInt} less useful. In contrast, one can freely increase $N$,
the order of approximation \eqref{expanMom}, and find better approximations.
Comparing with \eqref{expanAtan}, our modification provides better results.
We finished by discussing the solution of fractional variational problems
using the introduced approximations. Similar methods can also be applied
to fractional optimal control problems.

For fractional variational problems, the proposed expansions may be used at two different
stages during the solution procedure. The first approach, the one considered
in Sections~\ref{subsec:nsEx51} and \ref{subsec:nsEx52},
consists in a direct approximation of the problem,
and then treating it as a classical problem,
using standard methods to solve it.
The second approach would be to apply the
fractional Euler--Lagrange equation and then to use the approximations in order
to obtain a classical differential equation. However, the Euler--Lagrange equations
\eqref{myEL} involve right Riemann--Liouville derivatives,
which introduces undesirable difficulties.


\section*{Acknowledgments}

This work was supported by {\it FEDER} funds through
{\it COMPETE} --- Operational Programme Factors of Competitiveness
(``Programa Operacional Factores de Competitividade'')
and by Portuguese funds through the
{\it Center for Research and Development
in Mathematics and Applications} (University of Aveiro)
and the Portuguese Foundation for Science and Technology (FCT),
within project PEst-C/MAT/UI4106/2011
with COMPETE number FCOMP-01-0124-FEDER-022690.
Pooseh was also supported by FCT through
the Ph.D. fellowship SFRH/BD/33761/2009.
The authors are very grateful
to three anonymous referees for valuable remarks and comments,
which significantly contributed to the quality of the paper.


\small




\begin{thebibliography}{99}

\bibitem{AlD1}
R. Almeida\ and\ D. F. M. Torres,
Leitmann's direct method for fractional optimization problems,
Appl. Math. Comput. {\bf 217} (2010), no~3, 956--962.
{\tt arXiv:1003.3088}

\bibitem{Andrews}
G. E. Andrews, R. Askey\ and\ R. Roy,
{\it Special functions}, Encyclopedia of Mathematics
and its Applications, 71, Cambridge Univ. Press, Cambridge, 1999.

\bibitem{sug:r3}
M. Aoun, R. Malti, F. Levron\ and\ A. Oustaloup,
Numerical simulations of fractional systems:
An overview of existing methods and improvements,
Nonlinear Dynam. {\bf 38} (2004), no.~1-4, 117--131.

\bibitem{AtanHam}
T. M. Atanackovi\'c, S. Konjik, Lj. Oparnica\ and\ S. Pilipovi\'c,
Generalized Hamilton's principle with fractional derivatives,
J. Phys. A {\bf 43} (2010), no.~25, 255203, 12~pp.
{\tt arXiv:1101.2963}

\bibitem{Atan1}
T. M. Atanackovi\'c, S. Konjik\ and\ S. Pilipovi\'c,
Variational problems with fractional derivatives:
Euler-Lagrange equations, 
J. Phys. A {\bf 41} (2008), no.~9, 095201, 12~pp.
{\tt arXiv:1101.2961}

\bibitem{Atan0}
T. M. Atanackovic\ and\ B. Stankovic,
An expansion formula for fractional derivatives and its application,
Fract. Calc. Appl. Anal. {\bf 7} (2004), no.~3, 365--378.

\bibitem{Atan2}
T. M. Atanackovi\'c\ and\ B. Stankovic,
On a numerical scheme for solving differential equations
of fractional order, Mech. Res. Comm. {\bf 35} (2008), no.~7, 429--438.

\bibitem{Das}
S. Das,
{\it Functional fractional calculus for system identification and controls},
Springer, Berlin, 2008.

\bibitem{Kai}
K. Diethelm,
{\it The analysis of fractional differential equations},
Lecture Notes in Mathematics, 2004, Springer, Berlin, 2010.

\bibitem{Kai1}
K. Diethelm, N. J. Ford\ and\ A. D. Freed,
A predictor-corrector approach for the numerical solution
of fractional differential equations,
Nonlinear Dynam. {\bf 29} (2002), no.~1-4, 3--22.

\bibitem{Djordjevic}
V. D. Djordjevic\ and\ T. M. Atanackovic,
Similarity solutions to nonlinear heat conduction and Burgers/Korteweg-de
Vries fractional equations,
J. Comput. Appl. Math. {\bf 222} (2008), no.~2, 701--714.

\bibitem{Efe}
M. \"{O} Efe,
Battery power loss compensated fractional order sliding mode control of a quadrotor UAV,
Asian J. Control {\bf 14} (2012), no.~2, 413–-425.

\bibitem{Ford}
N. J. Ford\ and\ J. A. Connolly,
Comparison of numerical methods for fractional differential equations,
Commun. Pure Appl. Anal. {\bf 5} (2006), no.~2, 289--306.

\bibitem{Frederico}
G. S. F. Frederico\ and\ D. F. M. Torres,
Fractional conservation laws in optimal control theory,
Nonlinear Dynam. {\bf 53} (2008), no.~3, 215--222.
{\tt arXiv:0711.0609}

\bibitem{Jelicic}
Z. D. Jelicic\ and\ N. Petrovacki,
Optimality conditions and a solution scheme for fractional optimal control problems,
Struct. Multidiscip. Optim. {\bf 38} (2009), no.~6, 571--581.

\bibitem{Kilbas}
A. A. Kilbas, H. M. Srivastava\ and\ J. J. Trujillo,
{\it Theory and applications of fractional differential equations},
North-Holland Mathematics Studies, 204, Elsevier, Amsterdam, 2006.

\bibitem{Agrawal2}
P. Kumar\ and\ O. P. Agrawal,
An approximate method for numerical solution
of fractional differential equations,
Signal Process. {\bf 86} (2006), 2602–-2610.

\bibitem{Lebedev}
L. P. Lebedev\ and\ M. J. Cloud,
{\it The calculus of variations and functional analysis},
World Sci. Publishing, River Edge, NJ, 2003.

\bibitem{Li}
Y. Li, Y. Chen\ and\ H.-S. Ahn,
Fractional-order iterative learning control for fractional-order linear systems,
Asian J. Control {\bf 13} (2011), no.~1, 54--63.

\bibitem{Malinowska}
A. B. Malinowska\ and\ D. F. M. Torres,
Generalized natural boundary conditions for fractional
variational problems in terms of the Caputo derivative,
Comput. Math. Appl. {\bf 59} (2010), no.~9, 3110--3116.
{\tt arXiv:1002.3790}

\bibitem{Mozyrska}
D. Mozyrska\ and\ D. F. M. Torres,
Modified optimal energy and initial memory of fractional
continuous-time linear systems,
Signal Process. {\bf 91} (2011), no.~3, 379--385.
{\tt arXiv:1007.3946}

\bibitem{ID:207}
T. Odzijewicz, A. B. Malinowska\ and\ D.F.M. Torres,
Fractional variational calculus with classical and combined Caputo derivatives,
Nonlinear Anal. {\bf 75} (2012), no.~3, 1507--1515.
{\tt arXiv:1101.2932}

\bibitem{Tatiana}
T. Odzijewicz\ and\ D.F.M. Torres,
Calculus of variations with classical and fractional derivatives
Math. Balkanica {\bf 26} (2012), no~1-2, 191--202.
{\tt arXiv:1007.0567}

\bibitem{Pudlobny}
I. Podlubny,
{\it Fractional differential equations},
Mathematics in Science and Engineering, 198,
Academic Press, San Diego, CA, 1999.

\bibitem{Pontryagin}
L. S. Pontryagin, V. G. Boltyanskii, R. V. Gamkrelidze\ and\ E. F. Mishchenko,
{\it The mathematical theory of optimal processes},
Translated from the Russian by K. N. Trirogoff;
edited by L. W. Neustadt Interscience Publishers
John Wiley \& Sons, Inc.\, New York, 1962.

\bibitem{MyID:221}
S. Pooseh, R. Almeida\ and\ D. F. M. Torres,
Expansion formulas in terms of integer-order derivatives
for the Hadamard fractional integral and derivative,
Numer. Funct. Anal. Optim. {\bf 33} (2012), no.~3, 301--319.
{\tt arXiv:1112.0693}

\bibitem{MyID:225}
S. Pooseh, R. Almeida\ and\ D. F. M. Torres,
Approximation of fractional integrals by means of derivatives
Comput. Math. Appl. (2012), DOI: 10.1016/j.camwa.2012.01.068
{\tt arXiv:1201.5224}

\bibitem{Samko}
S. G. Samko, A. A. Kilbas\ and\ O. I. Marichev,
{\it Fractional integrals and derivatives},
translated from the 1987 Russian original,
Gordon and Breach, Yverdon, 1993.

\bibitem{Shen}
J. Shen\ and\ J. Cao,
Necessary and sufficient conditions for consensus of delayed fractional-order systems,
Asian J. Control (2012), DOI: 10.1002/asjc.492

\bibitem{Stoer}
J. Stoer\ and\ R. Bulirsch, {\it Introduction to numerical analysis},
translated from the German by R. Bartels, W. Gautschi and C. Witzgall,
third edition, Texts in Applied Mathematics, 12, Springer, New York, 2002.

\bibitem{Sun}
W. Sun, Y. Li, C. Li and Y. Chen,
Convergence speed of a fractional order consensus algorithm over undirected scale-free networks,
Asian J. Control {\bf 13} (2011), no.~6, 936--946.

\bibitem{Machado}
J. A. Tenreiro Machado, V. Kiryakova\ and\ F. Mainardi,
Recent history of fractional calculus,
Commun. Nonlinear Sci. Numer. Simul.
{\bf 16} (2011), no.~3, 1140--1153.

\bibitem{Machado1}
J. A. Tenreiro Machado, M. F. Silva, R. S. Barbosa,
I. S. Jesus, C. M. Reis, M. G. Marcos\ and\ A. F. Galhano,
Some applications of fractional calculus in engineering,
Math. Probl. Eng., Art. ID 639801, (2010) 34~pp.

\end{thebibliography}
\end{document}